\documentclass[12pt]{article}
\usepackage{amsmath, amscd, amssymb, latexsym, epsfig, color, amsthm, tikz, array}
\setlength{\textwidth}{6.5in}
\setlength{\textheight}{8.6in}
\setlength{\topmargin}{0pt}
\setlength{\headsep}{0pt}
\setlength{\headheight}{0pt}
\setlength{\oddsidemargin}{0pt}
\setlength{\evensidemargin}{0pt}
\flushbottom
\pagestyle{plain}

\numberwithin{equation}{section}

\newtheorem{theorem}{Theorem}[section]
\newtheorem{proposition}[theorem]{Proposition}

\newtheorem{conjecture}[theorem]{Conjecture}
\newtheorem{lemma}[theorem]{Lemma}

\newtheorem{algorithm}[theorem]{Algorithm}

\theoremstyle{definition}
\newtheorem{definition}[theorem]{Definition}
\newtheorem{example}[theorem]{Example}

\DeclareMathOperator{\type}{type}

\begin{document}

\title{Pure $\mathcal{O}$-sequences arising from $2$-dimensional PS ear-decomposable simplicial complexes}
\author{Steven Klee\\
\small Department of Mathematics \\[-0.8ex]
\small Seattle University\\[-0.8ex]
\small Seattle, WA 98122, USA\\[-0.8ex]
\small \texttt{klees@seattleu.edu}
\and Brian Nugent\\
\small Department of Mathematics \\[-0.8ex]
\small Seattle University\\[-0.8ex]
\small Seattle, WA 98122, USA\\[-0.8ex]
\small \texttt{nugentb@seattleu.edu}
}

\maketitle

\begin{abstract}
We show that the $h$-vector of a $2$-dimensional PS ear-decomposable simplicial complex is a pure $\mathcal{O}$-sequence. This provides a strengthening of Stanley's conjecture for matroid $h$-vectors in rank $3$.  Our approach modifies the approach of combinatorial shifting for arbitrary simplicial complexes to the setting of $2$-dimensional PS ear-decomposable complexes, which allows us to greedily construct a corresponding pure multicomplex. 
\end{abstract}

\section{Introduction and Background}

In the late 1970s,  Stanley \cite{Stanley-CM} conjectured that the $h$-vector of a matroid simplicial complex is a pure $\mathcal{O}$-sequence. Later,  Chari \cite{Chari} showed that any matroid simplicial complex admits a PS ear-decomposition, which inductively decomposes a simplicial complex into ears whose contribution to the $h$-vector could correspond to an interval of monomials in the divisibility lattice. As a consequence, it is natural to extend Stanley's conjecture to the family of PS ear-decomposable simplicial complexes.  In this paper, we use this approach to show that $h$-vectors of $2$-dimensional PS ear-decomposable simplicial complexes are pure $\mathcal{O}$-sequences.  We begin with the relevant background on face enumeration for simplicial complexes, then provide further background on Stanley's conjecture and PS ear-decomposable simplicial complexes. 

\subsection{Simplicial complexes and face numbers}

 A \textbf{simplicial complex} $\Delta$ on (finite) vertex set $V = V(\Delta)$ is a collection of subsets $F \subseteq V$ called \textbf{faces} with the property that if $F \in \Delta$ and $G \subseteq F$, then $G \in \Delta$ as well.   To each abstract simplicial complex, there is an associated geometric object called its \textbf{geometric realization}, $\| \Delta \|$, which contains a geometric simplex for each face $F \in \Delta$. 

The \textbf{dimension} of a face $F \in \Delta$ is $\dim(F) = |F|-1$ and the dimension of $\Delta$ is $\dim(\Delta) = \max \{ \dim(F) \ : \ F \in \Delta\}$.  We say that $\Delta$ is \textbf{pure} if all of its facets (maximal faces under inclusion) have the same dimension.  We will typically assume that $\Delta$ is $(d-1)$-dimensional and pure, meaning each of its facets contains $d$ vertices. 

The most fundamental combinatorial data of a $(d-1)$-dimensional simplicial complex $\Delta$ is encoded in its \textbf{$f$-vector}, $f(\Delta) = (f_{-1}(\Delta), f_0(\Delta), \ldots, f_{d-1}(\Delta))$, where the \textbf{$f$-numbers} $f_i(\Delta)$ count the number of $i$-dimensional faces in $\Delta$.  For example, $f_0(\Delta)$, $f_1(\Delta)$, and $f_2(\Delta)$ respectively count the number of vertices, edges, and triangular faces in $\Delta$.  Unless $\Delta$ itself is the empty complex, $f_{-1}(\Delta) = 1$, corresponding to the empty face. 

In many cases, it is more natural to perform a combinatorial transformation the $f$-vector of $\Delta$ to obtain the \textbf{$h$-vector}, $h(\Delta) = (h_0(\Delta), h_1(\Delta), \ldots, h_d(\Delta))$, whose entries are given by $$h_j(\Delta) = \sum_{i=0}^j (-1)^{j-i}\binom{d-i}{d-j}f_{i-1}(\Delta).$$ 

Concretely, when $d=3$ (that is, $\dim(\Delta) = 2$), the $h$-numbers of $\Delta$ are given by 
\begin{eqnarray*}
h_0(\Delta) &=& 1 \\
h_1(\Delta) &=& f_0(\Delta) - 3 \\
h_2(\Delta) &=& f_1(\Delta) - 2f_0(\Delta) + 3 \\
h_3(\Delta) &=& f_2(\Delta) - f_1(\Delta) + f_0(\Delta) - 1.
\end{eqnarray*}

\subsection{PS ear-decomposable complexes}

Chari \cite{Chari} defined the family of $(d-1)$-dimensional PS ear-decomposable simplicial complexes.  Before defining PS ear-decomposable simplicial complexes, we need to define the components that are used to build them, which are PS spheres and PS balls.  

\begin{definition}
A \textbf{PS sphere} is a triangulated sphere that can be decomposed as a join of simplex boundaries.  A \textbf{PS ball} is a triangulated ball that can be decomposed as the join of a simplex and a PS sphere.
\end{definition}

Let us illustrate these definitions when $d=3$.  We will use $\sigma^k$ to denote the $k$-dimensional simplex and $\partial \sigma^k$ to denote its boundary.  For example, when $k=3$, $\sigma^3$ is a solid tetrahedron and $\partial \sigma^3$ is its boundary.  When $k=0$ we set $\partial \sigma^0 = \{\emptyset$\} by convention. 

For $d=3$, the possible PS $2$-spheres and PS $2$-balls are shown in Figure \ref{figure:psballsspheres}. The boundary of each PS ball is highlighted in red. 

\begin{figure}
\begin{center}
\begin{tabular}{|m{1.1in}|m{1.1in}||m{1.1in}|m{1.1in}|} \hline
\multicolumn{2}{|c||}{PS Spheres} & \multicolumn{2}{c|}{PS Balls} \\ \hline
Decomposition & Geometry & Decomposition & Geometry \\ \hline
$\partial \sigma^3$ & 
\begin{center}\begin{tikzpicture}
  \draw[fill=gray!25] (0,0) -- (1.5,0) -- (2.25,1.3) -- (.75,1.3) -- (0,0);
  \draw (.75,1.3) -- (1.5,0);
  \draw[dashed] (0,0) -- (2.25,1.3);
  \foreach \p in {(0,0), (1.5,0), (.75,1.3), (2.25,1.3)}{
  	\draw[fill=black] \p circle (.1);
  } 
    \end{tikzpicture}\end{center}

&
$\sigma^0 * \partial \sigma^2$ & 
\begin{center}
\begin{tikzpicture}
\draw[red, thick , fill=gray!25]  (0,0) -- (2,0) -- (1,1.73) -- (0,0);
\foreach \p in {(0,0), (2,0), (1,1.73)}{
	\draw (1,.5) -- \p;
	\draw[red, fill=red] \p circle (.1); 
}
\draw[fill=black] (1,.5) circle (.1);
\end{tikzpicture}
\end{center}
\\ \hline
$\partial \sigma^1 * \partial \sigma^2$ & 

\begin{center}
\begin{tikzpicture}[scale=1.1]
\def \ts {(.25,1.25,-.25)};
\def \bs{(.25,-1.25,-.25)};
\def \a{(1,0,0)};
\def \b{(-.5,0,0)};
\def \c{(-.05,0,-1.15)};
\draw[fill=gray!25] \ts -- \a -- \b -- \ts;
\draw[fill=gray!25] \bs -- \a -- \b -- \bs;
\draw[fill=black] \a circle (.07);
\draw[fill=black] \b circle (.07);
\draw[fill=black] \c circle (.07);
\draw[fill=black]  \ts circle (.07);
\draw[fill=black] \bs circle (.07);
\draw[dashed] \c -- \ts;
\draw[dashed] \c -- \bs;
\draw \a -- \b;
\draw[dashed] \a -- \c  -- \b;
\end{tikzpicture}

\end{center}
&
$\sigma^0 * \partial \sigma^1 * \partial \sigma^1$ & 
\begin{center}
\begin{tikzpicture}
\draw[red, thick , fill=gray!25]  (0,0) -- (2,0) -- (2,2) -- (0,2) -- (0,0);
\foreach \p in {(0,0), (2,0), (2,2), (0,2)}{
	\draw (1,1) -- \p;
	\draw[red, fill=red] \p circle (.1); 
}
\draw[fill=black] (1,1) circle (.1);
\end{tikzpicture}
\end{center}

\\ \hline
$\partial \sigma^1 * \partial \sigma^1 * \partial \sigma^1$ & 
\begin{center}
\begin{tikzpicture}[scale=1.1]
\draw[fill=gray!25] (1,0,0) -- (0,1.41,0) -- (0,0,.707) -- (1,0,0);
\draw[fill=gray!25] (-1,0,0) -- (0,1.41,0) -- (0,0,.707) -- (-1,0,0);
\draw[fill=gray!25] (1,0,0) -- (0,-1.41,0) -- (0,0,.707) -- (1,0,0);
\draw[fill=gray!25] (-1,0,0) -- (0,-1.41,0) -- (0,0,.707) -- (-1,0,0);
\draw[dashed] (1,0,0) -- (0,0,-.707) -- (-1,0,0);
\draw[dashed] (0,1.41,0) -- (0,0,-.707) -- (0,-1.41,0);
\draw[dashed] (0,1.41,0) -- (0,0,-.707) -- (0,-1.41,0);
\draw[fill=black] (1,0,0) circle (.07);
\draw[fill=black] (-1,0,0) circle (.07);
\draw[fill=black] (0,1.41,0) circle (.07);
\draw[fill=black] (0,-1.41,0) circle (.07);
\draw[fill=black] (0,0,.707) circle (.07);
\draw[fill=black] (0,0,-.707) circle (.07);
\end{tikzpicture}

\end{center}&
$\sigma^1 * \partial \sigma^1$ &
\begin{center}\begin{tikzpicture}
  \draw[red, thick, fill=gray!25] (0,0) -- (1.5,0) -- (2.25,1.3) -- (.75,1.3) -- (0,0);
  \draw (.75,1.3) -- (1.5,0);

  \foreach \p in {(0,0), (1.5,0), (.75,1.3), (2.25,1.3)}{
  	\draw[red, fill=red] \p circle (.1);
  } 
    \end{tikzpicture}\end{center}

\\ \hline
&
&
$\sigma^2$ & 
\begin{center}
\begin{tikzpicture}
\draw[red, thick , fill=gray!25]  (0,0) -- (2,0) -- (1,1.73) -- (0,0);
\foreach \p in {(0,0), (2,0), (1,1.73)}{
	\draw[red, fill=red] \p circle (.1); 
}
\end{tikzpicture}
\end{center}

\\ \hline

\end{tabular}
\end{center}
\caption{PS spheres and balls in dimension 2} 
\label{figure:psballsspheres}
\end{figure}

This leads us to the main object of study in this paper, PS ear-decomposable simplicial complexes. 

\begin{definition}
Let $\Delta$ be a pure $(d-1)$-dimensional simplicial complex.  A \textbf{PS ear-decomposition} of $\Delta$ is a decomposition of $\Delta$ into subcomplexes $\Delta = \Sigma_0 \cup \Sigma_1 \cup \cdots \cup \Sigma_t$ such that
\begin{enumerate}
\item $\Sigma_0$ is a PS $(d-1)$-sphere, 
\item $\Sigma_j$ is a PS $(d-1)$-ball for $1 \leq j \leq t$, and
\item $\displaystyle \Sigma_j \cap \left( \bigcup_{i=0}^{j-1} \Sigma_i \right)= \partial \Sigma_j$ for all $1 \leq j \leq t$.
\end{enumerate}
We say $\Delta$ is \textbf{PS ear-decomposable} if it admits a PS ear-decomposition. 
\end{definition}
In other words, a PS ear-decomposable complex can be constructed inductively by starting with a PS sphere, then repeatedly gluing PS balls to the existing complex such that each successive gluing takes place along the boundary of the corresponding ball.

\subsection{Multicomplexes and $O$-sequences}

The second main objects of study in this paper are multicomplexes, which are multiset analogues of simplicial complexes. A \textbf{multicomplex} $\mathcal{M}$ is a (finite) collection of monomials that is closed under divisibility; i.e., if $\mu \in \mathcal{M}$ and $\nu | \mu$, then $\nu \in \mathcal{M}$. A multicomplex is \textbf{pure} if all of its maximal monomials (under divisibility) have the  same degree (with $\deg(x_i) = 1$ for all $i$).  

Just as the $f$-vector of a simplicial complex $\Delta$ counts the faces in $\Delta$ by cardinality, a multicomplex  has a corresponding \textbf{$F$-vector} that enumerates its elements by degree.  If $\mathcal{M}$ is a multicomplex and $d$ is the maximal degree of a monomial in $\mathcal{M}$, then the $F$-vector is $F(\mathcal{M}) = (F_0, F_1, \ldots, F_d)$, where $F_i(\mathcal{M})$ counts the number of monomials in $\mathcal{M}$ of degree $i$.  

A vector $F = (F_0, F_1, \ldots, F_d) \in \mathbb{Z}_{\geq 0}^{d+1}$ that can be realized as the $F$-vector of a (pure) multicomplex is called a \textbf{(pure) $\mathcal{O}$-sequence}.  

\begin{example}
The vector $F = (1, 3, 5, 3)$ is a pure $\mathcal{O}$-sequence, with corresponding multicomplex $\mathcal{M} = \{1, x, y, z, x^2, xy, xz, y^2, yz, x^3, xyz, y^3\}$.  We see that $\mathcal{M}$ is a multicomplex because the divisors of any monomial of $\mathcal{M}$ also belong to $\mathcal{M}$.  For example, $xyz \in \mathcal{M}$, and its divisors -- $1, x, y, z, xy, xz, yz$ -- also belong to $\mathcal{M}$.  Additionally, $\mathcal{M}$ is pure because every monomial in $\mathcal{M}$ is a divisor of a degree-3 monomial in $\mathcal{M}$.  
\end{example}

\begin{example}
The vector $F = (1, 3, 1)$ is an $\mathcal{O}$-sequence, but not a pure $\mathcal{O}$-sequence.  The monomials $\mathcal{M} = \{1, x, y, z, xy\}$ form a multicomplex with $F$-vector $(1, 3, 1)$.  However, $F$ is not a pure $\mathcal{O}$-sequence because any degree-$2$ monomial has at most two degree-$1$ divisors.  Therefore, a pure $\mathcal{O}$-sequence with $F_2 = 1$ must have $F_1 \leq 2$. 
\end{example}

\begin{example}
The $h$-vector of a PS sphere is a pure multicomplex. For PS 2-spheres we exhibit their $h$-vectors and corresponding pure multicomplexes: 
\begin{itemize}
\item If $\Sigma_0 = \partial \sigma^3$, then $h(\Sigma_0) = (1,1,1,1)$, which corresponds to the pure multicomplex $\mathcal{M} = \{1, x, x^2, x^3\}$. 
\item If $\Sigma_0 = \partial \sigma^1*\partial\sigma^2$, then $h(\Sigma_0) = (1,2,2,1)$, which corresponds to the pure multicomplex $\mathcal{M} = \{1, x, y, x^2, xy, x^2y\}$. 
\item If $\Sigma_0 = \partial \sigma^1*\partial \sigma^1*\partial \sigma^1$, then $h(\Sigma_0) = (1,3,3,1)$, which corresponds to the pure multicomplex $\mathcal{M} = \{1, x, y, z, xy, xz, yz, xyz\}$. 
\end{itemize}
In general, if $\Sigma_0 = \partial \sigma^{d_1} * \partial \sigma^{d_2} * \cdots * \partial \sigma^{d_k}$, its corresponding pure multicomplex contains all divisors of the monomial $x_1^{d_1}x_2^{d_2}\cdots x_k^{d_k}$. 
\end{example}

\subsection{Matroid $h$-vectors and Stanley's conjecture}

Having defined PS ear-decoposable simplicial complexes, multicomplexes, and $\mathcal{O}$-sequences, we are finally in a position to state the problem we wish to study.  

In the late 1970's, Stanley \cite{Stanley-CM} established a deep connection between commutative algebra and the combinatorics of $f$- and $h$-vectors of certain families of simplicial complexes in the following result. 

\begin{theorem}
A vector $h = (h_0, h_1, \ldots, h_d) \in \mathbb{Z}_{\geq 0}^d$ is the $h$-vector of a $(d-1)$-dimensional Cohen-Macaulay simplicial complex if and only if it is an $\mathcal{O}$-sequence. 
\end{theorem}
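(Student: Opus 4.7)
The plan is to route through the Stanley--Reisner correspondence and reduce the question to a classical result about Hilbert functions of standard graded algebras. For the forward direction, I would associate to $\Delta$ its Stanley--Reisner ring $k[\Delta] = k[x_1, \ldots, x_n]/I_{\Delta}$, where $I_{\Delta}$ is the square-free monomial ideal generated by non-faces of $\Delta$, and compute its Hilbert series as $\sum_{i=0}^{d} h_i(\Delta)\, t^{i}/(1-t)^{d}$. When $\Delta$ is Cohen--Macaulay and $k$ is infinite, $k[\Delta]$ is a Cohen--Macaulay ring of Krull dimension $d$, so it admits a linear system of parameters $\theta_1, \ldots, \theta_d$ forming a regular sequence. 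Passing to the Artinian reduction $A = k[\Delta]/(\theta_1, \ldots, \theta_d)$ produces a standard graded Artinian $k$-algebra whose Hilbert function is exactly $h(\Delta)$.

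At this point I would invoke Macaulay's theorem: a sequence of nonnegative integers is the Hilbert function of some standard graded Artinian $k$-algebra if and only if it is an $\mathcal{O}$-sequence. Applied to $A$, this immediately yields that $h(\Delta)$ is an $\mathcal{O}$-sequence. The nontrivial content here is packaged inside Macaulay's theorem itself, which I would cite as a black box.

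For the converse direction, begin with an $\mathcal{O}$-sequence $h = (h_0, h_1, \ldots, h_d)$ and a multicomplex $\mathcal{M}$ realizing it as an $F$-vector. The goal is to produce a $(d-1)$-dimensional Cohen--Macaulay simplicial complex with $h$-vector equal to $h$. I would aim for the stronger property of shellability, which implies Cohen--Macaulayness. Concretely, I would order the monomials of $\mathcal{M}$ compatibly with degree (e.g.\ by degree, then reverse-lexicographically) and associate to each monomial $\mu$ of degree $i$ a facet $F_\mu$ of a pure $(d-1)$-dimensional complex $\Delta$ in such a way that the restriction of $F_\mu$ to the previously attached facets has precisely $i$ vertices. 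The standard trick is to use a disjoint pool of ``free'' vertices $v_{\mu,1}, \ldots, v_{\mu, d-\deg(\mu)}$ to pad each monomial out to a facet of size $d$, choosing the padding carefully so that the partial shelling property holds. Verifying this shelling property then reduces the construction to a combinatorial check on the structure of $\mathcal{M}$.

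The main obstacle is the converse direction: Macaulay's theorem supplies a lex-segment ideal realizing the given $\mathcal{O}$-sequence, but translating that algebraic data into a genuine Cohen--Macaulay simplicial complex requires the care described above, since a naive facet-assignment will typically fail to be shellable. I would expect most of the technical work of a full write-up to sit in verifying the combinatorial restriction condition for the proposed facet order.
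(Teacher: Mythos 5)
The paper states this result as a citation to Stanley and provides no proof of its own, so there is nothing internal to compare against; I will evaluate your sketch on its merits. Your forward direction is exactly the standard argument and is correct: pass to the Stanley--Reisner ring $k[\Delta]$ with Hilbert series $\sum h_i t^i/(1-t)^d$, observe that Cohen--Macaulayness over an infinite field yields a linear system of parameters forming a regular sequence, take the Artinian reduction whose Hilbert function is $h(\Delta)$, and invoke Macaulay's characterization of Hilbert functions of standard graded Artinian algebras as $\mathcal{O}$-sequences.

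Your converse direction has the right strategy (produce a shellable complex) but, as written, the padding scheme is flawed in a way that is worth pinning down. You propose padding a degree-$e$ monomial $\mu$ with filler vertices $v_{\mu,1},\ldots,v_{\mu,d-e}$, and the notation indicates these are fresh for each $\mu$. That cannot work. If $F_\mu$ contains any vertex $z$ appearing in no earlier facet, then $F_\mu\setminus\{w\}$ still contains $z$ for every $w\neq z$ and hence cannot lie in the union of the earlier facets, so $w\notin r(F_\mu)$; one checks $|r(F_\mu)|\leq 1$ regardless of $\deg(\mu)$, which destroys the accounting $h_i=\#\{\mu:\ |r(F_\mu)|=i\}$. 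Worse, the facet for the constant monomial and the facet for a degree-one monomial would be disjoint, so the shelling condition already fails at the second step when $d\geq 2$. The filler vertices must be a \emph{fixed} reservoir shared across all monomials (for instance $z_1,\ldots,z_d$, with a degree-$e$ monomial padded by $z_{e+1},\ldots,z_d$), so that consecutive facets overlap along codimension-one faces and the restriction face records exactly the monomial part. You do flag that the padding must be chosen carefully and that most of the work lies in verifying the restriction condition, so you are aware of the difficulty; but the specific scheme you wrote down is one of the naive ones that fails, and the fix (shared fillers, not fresh ones) is the essential content of the construction.
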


We will not formally define Cohen-Macaulay simplicial complexes at this point as they will not play a direct role in the rest of this paper, however it is worth noting that triangulations of spheres and balls are Cohen-Macaulay simplicial complexes.  Another family of interesting simplicial complexes is the family of matroid simplicial complexes.  

\begin{definition}
A \textbf{matroid} is a nonempty simplicial complex $\Delta$ that satisfies the following additional property: if $F$ and $G$ are faces in $\Delta$ with $|F| < |G|$, then there exists an element $x \in G \setminus F$ such that $F \cup \{x\}$ is also a face of $\Delta$.
\end{definition}

The extra structure imposed by this so-called exchange axiom adds tremendous structure to matroid simplicial complexes, as is evidenced by the following theorem of Chari. 

\begin{theorem} {\rm{(Chari, \cite[Theorem 3]{Chari})}} \label{thm:matroidspsed}
If $\Delta$ is a coloop-free matroid simplicial complex, then $\Delta$ is PS ear-decomposable.
\end{theorem}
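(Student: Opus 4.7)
The plan is to proceed by induction on the cardinality of the ground set $E(M)$. The base case handles matroids whose only dependencies come from parallel classes; the independence complex is then already a join of simplex boundaries, i.e., a PS sphere, so the one-ear decomposition $\Delta = \Sigma_0$ suffices.

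For the inductive step, given a coloop-free matroid $M$ with independence complex $\Delta$, I would select an element $e \in E(M)$ (which is not a coloop by hypothesis) and use the standard deletion-contraction decomposition
$$\Delta = (\Delta \setminus e) \cup \st_\Delta(e), \qquad (\Delta \setminus e) \cap \st_\Delta(e) = \lk_\Delta(e),$$
where $\Delta \setminus e$ is the independence complex of the deletion $M \setminus e$ and $\lk_\Delta(e)$ is the independence complex of the contraction $M/e$. Recursively obtaining PS ear-decompositions $\Delta \setminus e = \Sigma_0 \cup \cdots \cup \Sigma_s$ and $\lk_\Delta(e) = \Theta_0 \cup \cdots \cup \Theta_r$, I would concatenate them into
$$\Delta \;=\; \Sigma_0 \cup \cdots \cup \Sigma_s \;\cup\; (\{e\}*\Theta_0) \cup \cdots \cup (\{e\}*\Theta_r).$$
Since the join of a single vertex with a PS sphere is a PS ball, and with a PS ball is again a PS ball, each new piece has the required form.

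The essential verification is the intersection axiom for each new ear. Using $\partial(\{e\}*X) = X \cup (\{e\}*\partial X)$, interpreting $\partial X = \emptyset$ when $X$ is a sphere, the $X$-portion of the boundary lies inside $\lk_\Delta(e) \subseteq \Delta \setminus e$, which is already assembled; the $\{e\}*\partial X$-portion is assembled from previously-attached cones $\{e\}*\Theta_0,\ldots,\{e\}*\Theta_{j-1}$, using the PS ear-axioms of the contraction decomposition. Thus each new PS ball meets the existing complex exactly along its boundary.

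The principal obstacle is the coloop issue: although $M$ is coloop-free, the deletion $M \setminus e$ or the contraction $M/e$ may acquire new coloops, so the induction hypothesis does not apply directly. The remedy is to strip coloops before recursing. A coloop $c$ forces the independence complex to be a cone $\{c\}*\Delta'$, and joining a PS ear-decomposition of $\Delta'$ with $\{c\}$ yields a PS ear-decomposition of $\{c\}*\Delta'$ of one higher dimension, since joining with a vertex preserves the PS structure of each ball and sphere. One must verify carefully that this coning operation preserves the intersection axioms and that the recursive decompositions of $\Delta \setminus e$ and $\lk_\Delta(e)$ are consistent enough that the first cone $\{e\}*\Theta_0$ attaches to $\Delta \setminus e$ exactly along the PS sphere $\Theta_0$ rather than a proper subcomplex of it. This compatibility, rather than the combinatorial bookkeeping itself, is where the argument is most delicate.
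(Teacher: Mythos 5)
The paper does not prove this statement; it simply cites Chari's Theorem~3, so there is no in-paper argument to compare against. Evaluating your proposal on its own terms, the high-level deletion--contraction skeleton (split $\Delta$ into $\Delta \setminus e$ and $\{e\} * \lk_\Delta(e)$, ear-decompose each, concatenate, verify the intersection axiom via $\partial(\{e\}*X) = X \cup (\{e\}*\partial X)$) is a reasonable way to approach this, and your verification of the intersection condition for the coned ears is essentially correct. But you have correctly identified the crux of the difficulty --- the deletion or contraction acquiring coloops --- and then resolved it incorrectly.

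The proposed fix, ``strip coloops and cone back,'' does not work. If $c$ is a coloop so that $\Delta \setminus e = \{c\} * \Delta'$, and $\Delta' = \Sigma_0 \cup \cdots \cup \Sigma_s$ is a PS ear-decomposition, then $\{c\} * \Delta' = (\{c\}*\Sigma_0) \cup \cdots \cup (\{c\}*\Sigma_s)$ has first piece $\{c\}*\Sigma_0$, which is a PS \emph{ball}, not a PS sphere, so this is not a PS ear-decomposition. The obstruction is not a bookkeeping issue: a cone over a $(d-2)$-complex has $h_d = 0$, whereas every $(d-1)$-dimensional PS ear-decomposable complex has $h_d = 1 + (\text{number of ears}) \geq 1$, so cones are simply never PS ear-decomposable. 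Thus when $M \setminus e$ has a coloop (equivalently, $e$ lies in a $2$-element cocircuit of $M$), your induction hypothesis is inapplicable to $\Delta \setminus e$ and the argument has no way to proceed. Chari's actual proof must choose $e$ with more care (or use a different recursive structure) precisely to control this; a naive deletion--contraction with an arbitrary $e$ does not go through.

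A secondary error: your base case is misidentified. If the only dependencies of $M$ come from parallel classes of sizes $n_1, \ldots, n_k$, then $M \cong U_{1,n_1} \oplus \cdots \oplus U_{1,n_k}$ and $\Delta$ is the join of $k$ discrete sets of sizes $n_1, \ldots, n_k$; a discrete set of $n_i$ points equals $\partial\sigma^{n_i - 1}$ only when $n_i = 2$. The independence complex is a PS sphere exactly when $M$ is a direct sum of \emph{circuits} $U_{n_i - 1, n_i}$, in which case $\Delta = \partial\sigma^{n_1 - 1} * \cdots * \partial\sigma^{n_k - 1}$. For $k=1, n_1 = 3$ your claimed base case already fails: three parallel points give $\Delta = \{\emptyset, \{1\}, \{2\}, \{3\}\}$, which is not a sphere.
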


Here, the condition that $\Delta$ is coloop-free means that topologically, $\Delta$ is not a cone.  As such, there is no harm in considering only coloop-free matroids because cone vertices only append zeros to the end of the $h$-vector. Further, the following conjecture of Stanley has remained tantalizingly open for the past several decades: 

\begin{conjecture} {\rm{(Stanley's Conjecture \cite[p. 59]{Stanley-CM}) \\}}
The $h$-vector of a matroid simplicial complex is a pure $\mathcal{O}$-sequence.
\end{conjecture}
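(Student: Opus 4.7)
The plan is to prove the strictly stronger statement that the $h$-vector of any $2$-dimensional PS ear-decomposable simplicial complex is a pure $\mathcal{O}$-sequence. Combined with Chari's theorem above, this immediately yields Stanley's conjecture in rank $3$. Attacking the general conjecture in arbitrary dimension is out of reach with present techniques, so I would restrict attention to $d=3$, where the PS ear-decomposition gives direct combinatorial access to the pieces building up $\Delta$.

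I would proceed by induction on the number of ears in a fixed PS ear-decomposition $\Delta = \Sigma_0 \cup \Sigma_1 \cup \cdots \cup \Sigma_t$, constructing a pure multicomplex $\mathcal{M}_j$ whose $F$-vector equals $h(\Sigma_0 \cup \cdots \cup \Sigma_j)$. The base case is handled by the description of the $h$-vector of a PS $2$-sphere as the set of divisors of a single monomial $x_1^{d_1} x_2^{d_2} \cdots x_k^{d_k}$ exhibited in the examples above. At step $j$, I would adjoin to $\mathcal{M}_{j-1}$ a ``divisibility interval'' of new monomials $\{\mu : \mu_0 \mid \mu \mid \mu_1\}$ realizing the $h$-vector contribution $h(\Sigma_j) - h(\partial\Sigma_j)$ of the new ear, where $\mu_1$ is a newly introduced degree-$3$ monomial, so that purity is preserved at every stage.

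The principal difficulty is showing that such an interval can always be found: one must produce $\mu_0, \mu_1$ whose interval is disjoint from $\mathcal{M}_{j-1}$, has exactly the right cardinality profile in each degree, and leaves every earlier monomial dividing some degree-$3$ monomial of $\mathcal{M}_j$. Here I would adapt the combinatorial shifting idea alluded to in the abstract: at each stage pick, in a suitable lexicographic order, the ``smallest'' not-yet-used monomial $\mu_0$ whose proper divisors all already lie in $\mathcal{M}_{j-1}$, and extend upward to a degree-$3$ top $\mu_1$. One then verifies, case by case over the four PS $2$-ball types $\sigma^2$, $\sigma^1 * \partial\sigma^1$, $\sigma^0 * \partial\sigma^1 * \partial\sigma^1$, and $\sigma^0 * \partial\sigma^2$, that the dimension profile of the required interval matches the contribution of that ear to the $h$-vector, taking into account that the boundary $\partial \Sigma_j$ has already been accounted for in $\mathcal{M}_{j-1}$.

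A secondary obstacle, and likely the most delicate point, is that the greedy construction may be sensitive to the order in which the ears are attached: a naive traversal of the given PS ear-decomposition might force us to adjoin an interval whose bottom $\mu_0$ has a divisor not yet present in $\mathcal{M}_{j-1}$. To handle this I expect it will be necessary either to reorder the ears (for instance, attaching ears with larger-dimensional boundaries, hence larger $h$-vector contribution concentrated in low degrees, first) or to modify the shifting step locally so that the interval chosen accommodates the actual boundary $\partial \Sigma_j$ of the current ear. Proving that such a compatible ordering or shifting always exists is, I expect, exactly the technical heart of the argument.
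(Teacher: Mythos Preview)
First, note that the statement is Stanley's Conjecture in full generality, which the paper does \emph{not} prove; both you and the paper restrict to rank $3$ via the stronger claim about $2$-dimensional PS ear-decomposable complexes. Your high-level strategy---build the multicomplex ear by ear, adjoining an interval of monomials per ear, possibly after reordering---matches the paper's. But there is a genuine gap precisely at the point you flag as ``most delicate,'' and it is more serious than a reordering issue. The hard ears are those of type $\sigma^2$ (the paper's F-moves): each contributes $(0,0,0,1)$ to the $h$-vector, so its ``interval'' is a single degree-$3$ monomial. For your greedy step to succeed you must know that the degree-$2$ monomials already present support at least as many new degree-$3$ monomials as there are F-moves. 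This is a \emph{global extremal} statement---a bound on how many missing triangles a PS ear-decomposable complex can have given its counts of A-, B-, and E-ears---and no local shifting or reordering argument produces it.

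The paper supplies this missing ingredient by abstracting the $1$-skeleton to a \emph{constructible graph} $G$, defining a canonical \emph{compression} $\mathcal{C}(G)$ (same move-counts, unlabeled edges inserted in revlex order), and proving the key Theorem~\ref{thm:compressiontriangles}: $\#T(G) \le \#T(\mathcal{C}(G))$. With this triangle bound one computes directly that the number of available degree-$3$ monomials after all A-, B-, E-moves is at least $\eta_F$. Accordingly, the paper does not induct along the given decomposition at all; it \emph{replaces} $\Delta$ wholesale by a canonical $\mathcal{C}(\Delta)$ with the same $h$-vector in which all B-moves come first, then A-moves, then E-moves in revlex order, then F-moves (Algorithm~\ref{tetrabasealg}). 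Your anticipated reordering is therefore right in spirit, but the actual mechanism---a global compression controlled by an extremal graph-theoretic lemma---is the technical heart of the argument and is absent from your plan. The paper's case split is also by the base sphere $\Sigma_0$ (tetrahedron, bipyramid, octahedron) rather than by ear type, and the latter two cases require further ad hoc reductions that your outline does not anticipate.
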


It follows from Theorem \ref{thm:matroidspsed} that matroid simplicial complexes are Cohen-Macaulay and hence their $h$-vectors are $\mathcal{O}$-sequences.  The huge difficulty comes in establishing the purity condition. Based on Stanley's Conjecture and Chari's Theorem, the following conjecture is also quite natural. 

\begin{conjecture}
Let $\Delta$ be a PS ear-decomposable simplicial complex.  Then $h(\Delta)$ is a pure $\mathcal{O}$-sequence.
\end{conjecture}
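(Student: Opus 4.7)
My plan is to prove this statement in the $2$-dimensional case ($\dim(\Delta) = 2$); in full generality the conjecture appears to be genuinely harder, as its resolution would imply Stanley's conjecture for rank-$3$ matroids. The approach is induction on the number of ears $t$ in a PS ear-decomposition $\Delta = \Sigma_0 \cup \Sigma_1 \cup \cdots \cup \Sigma_t$. At each stage $j$ I will explicitly maintain a pure multicomplex $\mathcal{M}_j$ on a fixed pool of variables whose $F$-vector equals $h(\Sigma_0 \cup \cdots \cup \Sigma_j)$. For the base case, when $\Sigma_0 = \partial\sigma^{d_1} * \cdots * \partial\sigma^{d_k}$, the multicomplex $\mathcal{M}_0$ of all divisors of $x_1^{d_1}\cdots x_k^{d_k}$ is manifestly pure of top degree $3$ and recovers $h(\Sigma_0)$, as observed in the last example of the introduction.

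The inductive step begins with a direct calculation: for each of the four PS $2$-balls in Figure \ref{figure:psballsspheres}, compute the change $\Delta h$ to the $h$-vector produced by gluing the ball along its boundary. Using the explicit formulas for $h_0, \ldots, h_3$ in the introduction together with the combinatorics of each join, one finds $\Delta h = (0,0,0,1)$, $(0,0,1,1)$, $(0,1,1,1)$, and $(0,1,2,1)$, respectively. In each case $\Delta h$ is itself the $F$-vector of an interval $[m, M]$ in the divisibility lattice on monomials with $\deg M = 3$. Thus, gluing the ear $\Sigma_j$ amounts to selecting an interval $I_j$ of this prescribed shape, consisting only of monomials not already in $\mathcal{M}_{j-1}$, so that $\mathcal{M}_j := \mathcal{M}_{j-1} \cup I_j$ is again a pure multicomplex.

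The heart of the argument is a greedy procedure, modeled on combinatorial shifting for simplicial complexes, for choosing each $I_j$. Starting from a candidate top monomial $M$ and its interval $[m, M]$, I will apply elementary variable-swaps $x_r \mapsto x_s$ (with $r < s$ in a fixed linear order) that resolve any divisibility conflict with $\mathcal{M}_{j-1}$ while preserving both the shape of the interval and the fact that $\deg M = 3$. Once $M$ and its accompanying monomials in $I_j$ have been relocated, the remaining proper divisors in $[m, M]$ automatically become divisors of a degree-$3$ monomial of $\mathcal{M}_j$, so purity is preserved by construction.

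The main obstacle, and the reason for the restriction to dimension two, will be proving that the greedy shifting procedure always terminates successfully: that the pool of admissible substitutions is never exhausted, and that no swap designed to avoid one conflict creates a new divisibility-closure violation with the portion of $\mathcal{M}_{j-1}$ coming from earlier ears. Here the combinatorial simplicity of $2$-dimensional PS ears is decisive -- each interval has at most four monomials, the four ear types are enumerable, and the boundary of any incoming ball is either a triangle or a $4$-cycle whose vertex usage can be tracked explicitly. In higher dimensions, both the variety of admissible ears and the cascade of potential shift conflicts grow rapidly, which is why the conjecture appears to stay open beyond $\dim(\Delta) = 2$.
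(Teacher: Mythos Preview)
Your proposal correctly isolates the interval-of-monomials framework and the $\Delta h$ computation for each ear type, and you are right that the restriction to dimension~$2$ is essential. But there is a genuine gap at the point you yourself flag as ``the main obstacle.'' The hard case is the F-move (the ear $\sigma^2$), which contributes $(0,0,0,1)$: you must insert a single degree-$3$ monomial all of whose degree-$2$ divisors already lie in $\mathcal{M}_{j-1}$. An F-move introduces no new vertex, hence no new variable, so your ``variable-swap $x_r \mapsto x_s$'' mechanism cannot manufacture room in degree~$3$ if the degree-$2$ layer of $\mathcal{M}_{j-1}$ is too thin. What is needed is a \emph{global counting} argument bounding the number of F-moves in $\Delta$ by the number of degree-$3$ monomials supported above the degree-$2$ monomials already placed. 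Processing the ears $\Sigma_1,\ldots,\Sigma_t$ in their given order and making greedy local choices gives you no such bound; nothing in the outline rules out early interval placements that exhaust the available degree-$3$ slots before all F-moves are handled.

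The paper closes this gap by a route quite different from ear-by-ear induction. It first replaces $\Delta$ by a \emph{compressed} PS ear-decomposable complex $\mathcal{C}(\Delta)$ with the same $h$-vector, in which all B-moves come first, then all A-moves, then all E-moves (each glued revlex-minimally), then all F-moves. The technical core is an extremal result on the underlying graph (Theorem~\ref{thm:compressiontriangles}): among all constructible graphs with prescribed numbers of A-, B-, and E-moves, the compressed one has the most triangles. Since every triangle of the graph of $\Delta$ is either a $2$-face or a missing triangle available for an F-move, this yields the inequality $\eta_F \le \binom{p-2}{3}+\binom{q-2}{2}+b'+2b''$, which is shown to coincide exactly with the number of degree-$3$ monomials the algorithm's degree-$2$ layer can support. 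That extremal bound---proved via shifting operators on graphs and an inductive edge-redistribution lemma---together with separate case analyses for the bipyramid and octahedron base spheres, is the substantive content your outline is missing.
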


Imani et al. \cite{Imani-et-al} established this result for $1$-dimensional PS ear-decomposable simplicial complexes.  Our goal in this paper is to establish the same result in dimension $2$.  Our main theorem is the following result.

\begin{theorem} \label{thm:mainresult}
Let $\Delta$ be a $2$-dimensional PS ear-decomposable simplicial complex.  Then $h(\Delta)$ is a pure $\mathcal{O}$-sequence. 
\end{theorem}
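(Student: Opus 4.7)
The plan is to proceed by induction on the number $t$ of ears in the decomposition $\Delta = \Sigma_0 \cup \Sigma_1 \cup \cdots \cup \Sigma_t$, maintaining at each stage a pure multicomplex $\mathcal{M}_j$ whose $F$-vector equals $h(\Delta_j)$, where $\Delta_j := \Sigma_0 \cup \cdots \cup \Sigma_j$. The base case $j=0$ is handled by the example in the excerpt: for $\Sigma_0 = \partial\sigma^{d_1} * \cdots * \partial\sigma^{d_k}$ with $d_1 + \cdots + d_k = 3$, the set of divisors of $x_1^{d_1} \cdots x_k^{d_k}$ is a pure multicomplex with the right $F$-vector.

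For the inductive step, a direct calculation using the formulas at the end of Section~1.1 shows that $h(\Delta_j) - h(\Delta_{j-1})$ equals one of $(0,0,0,1)$, $(0,0,1,1)$, $(0,1,1,1)$, or $(0,1,2,1)$ according as $\Sigma_j$ is $\sigma^2$, $\sigma^1 * \partial\sigma^1$, $\sigma^0 * \partial\sigma^2$, or $\sigma^0 * \partial\sigma^1 * \partial\sigma^1$. Each of these four profiles is naturally realized by an interval in the divisibility lattice: a single top monomial; a chain of length $1$; a chain of length $2$; and a rank-$2$ Boolean interval, respectively. The goal at each step is to append such an interval to $\mathcal{M}_{j-1}$ in a way that keeps the resulting collection both divisibility-closed and pure.

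The mechanism for choosing the interval is a greedy shifting procedure. I would fix a total order on the variables at the outset. For the two ear types that add a new vertex ($\sigma^0 * \partial\sigma^2$ and $\sigma^0 * \partial\sigma^1 * \partial\sigma^1$), I would introduce a fresh variable to label the new vertex and root the new interval at that variable; closure under divisibility then follows because the remaining divisors involve only older variables and are already forced to lie in $\mathcal{M}_{j-1}$ by the inductive hypothesis applied to $\partial\Sigma_j \subseteq \Delta_{j-1}$. For the two ear types that do not add a new vertex ($\sigma^2$ and $\sigma^1 * \partial\sigma^1$), the base of the interval must already lie in $\mathcal{M}_{j-1}$ and the new top must be assembled from existing variables in a way that is compatible with how $\partial\Sigma_j$ sits in $\Delta_{j-1}$.

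The main obstacle will be precisely these two non-vertex-creating cases, where a naive choice of base monomial can collide with an existing maximal monomial of $\mathcal{M}_{j-1}$ and destroy divisibility closure or purity. To resolve this, I would adapt combinatorial shifting to the ear setting by permuting variable labels on a bounded set of monomials in $\mathcal{M}_{j-1}$ whenever a collision arises, thereby freeing a legal base without changing the $F$-vector. The viability of such a shift relies on the fact that only four PS $2$-ball types can occur and that in dimension $2$ the maximal monomials of $\mathcal{M}_{j-1}$ have degree at most $3$, which leaves enough slack to absorb the swap. I expect the bulk of the work to be a finite case analysis verifying, for each combinatorial type of $\Sigma_j$ and each way $\partial\Sigma_j$ can sit inside $\Delta_{j-1}$, that a suitable base monomial exists after the appropriate shift.
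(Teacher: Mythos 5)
Your high-level plan has the right ingredients (the four ear types, their $h$-vector contributions, the interval-in-the-divisibility-lattice picture, and the recognition that a shifting-style device is needed), but the proposal has a genuine gap at the point you yourself flag as the crux: the non-vertex-creating ears, and in particular the F-moves ($\sigma^2$, contributing $(0,0,0,1)$). The difficulty there is not a \emph{local} collision that can be cured by permuting variable labels on a bounded patch of $\mathcal{M}_{j-1}$. It is a \emph{global} counting constraint: after a long sequence of ears, one must know that the number of missing triangles still to be filled does not exceed the number of degree-$3$ monomials whose degree-$2$ divisors already all lie in the partial multicomplex. Processing the ears in their given order, you have essentially no control over the partial complex, and no local relabeling can create degree-$3$ room that the degree-$2$ layer simply does not support. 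Your appeal to "enough slack" because maximal monomials have degree at most $3$ is exactly what needs to be proved, and it does not follow from any finite case analysis of individual ear attachments.

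The paper resolves this with two ideas that are absent from your sketch. First, rather than inducting in the given ear order, it \emph{reorders} the ears (B-moves, then A-moves, then E-moves chosen greedily in revlex order, then F-moves) into a canonical "compressed" complex $\mathcal{C}(\Delta)$ with the same $h$-vector, so that the intermediate structure is explicit. Second, and crucially, it proves an extremal result (Theorem~\ref{thm:compressiontriangles}) about "constructible graphs"---the graphs of $2$-dimensional PS ear-decomposable complexes---showing that the compressed constructible graph maximizes the number of triangles among all constructible graphs with the same move profile. This theorem is what bounds $\eta_F$ by the count of available degree-$3$ monomials after the compressed Steps 0--2, and it is precisely the global input your inductive scheme lacks. (There are also additional subtleties in the octahedron base case, handled via separate algorithms and a reduction using the shift $\mathcal{S}_{4,5}$, which your uniform induction would also need to confront.) Without an analogue of Theorem~\ref{thm:compressiontriangles}, or some other global bound on $\eta_F$, the inductive step for F-moves cannot be completed.
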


Before we delve into the technical results leading up to the proof, we will begin with a broad overview.  Let $\Delta = \Sigma_0 \cup \Sigma_1 \cup \cdots \cup \Sigma_t$ be a $2$-dimensional PS ear-decomposable simplicial complex.  We will transform $\Delta$ into a new PS ear-decomposable simplicial complex $\mathcal{C}(\Delta) = \Sigma_0' \cup \Sigma_1' \cup \cdots \cup \Sigma_t'$ with the same number of ears such that $h(\Delta) = h(\mathcal{C}(\Delta))$.  We call $\mathcal{C}(\Delta)$ the \textbf{compression} of $\Delta$, which will serve as an analogue of classical compression/shifting operators to in the setting of PS ear-decomposable simplicial complexes.  

For arbitrary simplicial complexes, compression and shifting operators play an important role in the characterization of $f$-vectors of simplicial complexes in the Kruskal-Katona Theorem \cite{Erdos-Ko-Rado, Stanley-greenbook} and also in Stanley's characterization of $h$-vectors of Cohen-Macaulay simplicial complexes \cite{Stanley-CM}. The difference in this setting is that $\mathcal{C}(\Delta)$ is not generally a shifted or compressed simplicial complex because these operators do not preserve PS ear-decomposability or even purity of the underlying simplicial complex.  Instead, the ears in the PS ear-decomposition of $\mathcal{C}(\Delta)$, are added greedily with respect to revlex order (in a certain sense that will be made more precise later), and to each ear we define a corresponding set of monomials, also chosen greedily, that can be used to explicitly construct a corresponding pure multicomplex whose $F$-vector is $h(\mathcal{C}(\Delta))$. 

The rest of the paper is structured as follows.  In Section \ref{section:shifting-ops} we study the extremal combinatorics of the underlying graph of a $2$-dimensional PS ear-decomposable simplicial complex, proving the main technical results we will use in guaranteeing the existence of the compressed complexes $\mathcal{C}(\Delta)$. In Section \ref{section:main} we prove the main result. Section \ref{section:main} has three main subsections corresponding to the three different PS-spheres of dimension $2$.  The majority of the work goes into handling the case that $\Sigma_0 = \partial \sigma^3$ is the boundary of a tetrahedron.  In the case that $\Sigma_0 = \partial \sigma^1 * \partial \sigma^2$ or $\Sigma_0  = \partial \sigma^1 * \partial \sigma^1 * \partial \sigma^1$ is a bipyramid or octahedron, the proof generally reduces to the case of the tetrahedron, with some exceptions for handling small boundary cases. Ultimately, the proof of Theorem \ref{thm:mainresult} is given in Theorems \ref{thm:basespheretet}, \ref{thm:basespherebipyramid}, and \ref{thm:basesphereoctahedron}.

\section*{Acknowledgments}

We gratefully acknowledge support from NSF grant  DMS-1600048.

\section{Shifting operators and constructible graphs} \label{section:shifting-ops}

If $\Delta$ is a $2$-dimensional PS ear-decomposable simplicial complex with compression $\mathcal{C}(\Delta) = \Sigma_0' \cup \Sigma_1' \cup \cdots \cup \Sigma_t'$, our goal is to assign a family of monomials $\mathcal{M}_i$ to each $\Sigma_i'$ so that $\mathcal{M}_0 \cup \cdots \cup \mathcal{M}_i$ is a pure multicomplex whose $F$-vector is the same as the $h$-vector of $\Sigma_0' \cup \cdots \cup \Sigma_i'$ for each $i$.  As we will see, it is relatively easy to describe the families $\mathcal{M}_i$ when $\Sigma_i$ is one of $\sigma^0 * \partial \sigma^2$, $\sigma^0 * \partial \sigma^1 * \partial \sigma^1$, or $\sigma^1 * \partial \sigma^0$, but it is more difficult when $\Sigma_i = \sigma^2$ fills a missing triangle.  Filling a missing triangle contributes $(0,0,0,1)$ to the $h$-vector, so the challenge in studying missing triangles is to know that there cannot be so many missing triangles to be filled that they would exceed the possible support of degree-2 monomials in the multicomplex. 

Throughout this section, we will use $T(G)$ to denote the set of of triangles ($3$-cycles) in a simple graph $G$ and $\#T(G)$ to denote the cardinality of that set.  In order to better understand triangles in $G$, we begin by exploring the extremal combinatorics of the graph of a PS ear-decomposable simplicial complex.  

Let $G = (V,E)$ be a simple graph with vertices $v_1, v_2, \ldots, v_n$.  For any distinct $i,j \in [n]$, define an operator $\mathcal{S}_{i,j}$ that acts on the edges of $G$ as follows
$$
\mathcal{S}_{i,j}(e) = 
\begin{cases}
(e \setminus \{v_j\}) \cup \{v_i\} & \text{ if } v_j \in e \text{ and } v_i \notin e \text{ and } (e \setminus \{v_j\}) \cup \{v_i\} \notin E \\
e & \text{ otherwise.}
\end{cases}
$$
In other words, $\mathcal{S}_{i,j}$ shifts edges incident to vertex $v_j$ to become incident to vertex $v_i$ whenever possible.  We slightly abuse notation and use $\mathcal{S}_{i,j}(G)$ to denote the resulting graph. 

\begin{lemma} \label{lemma:shifttriangles}
Let $G$ be a simple graph on vertex set $\{v_1,v_2,\ldots,v_n\}$.  For any distinct $i,j \in [n]$, $$\#T(G) \leq \#T(\mathcal{S}_{i,j}(G)).$$
\end{lemma}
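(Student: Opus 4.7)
The plan is to construct an injection $f \colon T(G) \to T(G')$, where $G' = \mathcal{S}_{i,j}(G)$. Partition the other vertices by their adjacency to $v_i, v_j$ in $G$: let $A = N_G(v_j) \setminus (N_G(v_i) \cup \{v_i\})$ (the neighbors of $v_j$ whose edge to $v_j$ gets shifted to $v_i$), and let $B = N_G(v_i) \cap N_G(v_j)$ (common neighbors, whose edge $v_j v_k$ stays because $v_i v_k$ already exists). With this notation, in $G'$ the edge $v_j v_k$ survives exactly when $v_k \in B \cup \{v_i\}$, and $v_i$ acquires every $v_k \in A$ as a new neighbor while keeping all of its old neighbors.

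I would then define $f$ case-by-case on a triangle $T \in T(G)$:
\begin{itemize}
\item If $v_j \notin T$, set $f(T) = T$. No edge of $T$ involves $v_j$, so nothing moves.
\item If $T = \{v_i, v_j, v_k\}$ (so $v_k \in B$), set $f(T) = T$. The edge $v_i v_j$ is fixed by $\mathcal{S}_{i,j}$, $v_i v_k$ is not incident to $v_j$, and $v_j v_k$ survives because $v_k \in B$.
\item If $v_j \in T$, $v_i \notin T$, and $T = \{v_j, v_k, v_l\}$ with $v_k, v_l \in B$, set $f(T) = T$ for the same reason.
\item If $v_j \in T$, $v_i \notin T$, and at least one of $v_k, v_l$ lies in $A$, set $f(T) = \{v_i, v_k, v_l\}$.
\end{itemize}
For the last case I need to check that $\{v_i, v_k, v_l\}$ really is a triangle of $G'$: the edge $v_k v_l$ is unaffected (it has no $v_j$ endpoint), and for each $v_k \in A$ the edge $v_i v_k$ is new in $G'$ by construction of $\mathcal{S}_{i,j}$, while for $v_k \in B$ the edge $v_i v_k$ was already in $G$. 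The other cases where $f(T) = T$ are immediate from the description of edges in $G'$ above.

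The only remaining task is injectivity. Two triangles $T_1 \neq T_2$ could only collide if one has the form $T_1 = \{v_j, v_k, v_l\}$ (with some vertex in $A$) and the other has the form $T_2 = \{v_i, v_k, v_l\}$, both mapping to $\{v_i, v_k, v_l\}$; but for $T_2$ to be a triangle of $G$, we must have $v_i v_k, v_i v_l \in E(G)$, which forces $v_k, v_l \in B \cup \{v_j\}$, contradicting the assumption that $\{v_k, v_l\} \cap A \neq \emptyset$. Two substitutions cannot collide either, since $\{v_i, v_k, v_l\} = \{v_i, v_{k'}, v_{l'}\}$ recovers $T_1 = T_2$. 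Hence $f$ is an injection and $\#T(G) \le \#T(G')$. The main subtlety I anticipate is keeping the bookkeeping between $A$ and $B$ straight to confirm that the substitution case always lands in $T(G')$; once that is handled, injectivity comes out cleanly because a triangle of $G$ through $v_i$ cannot have a vertex of $A$ among its other two vertices.
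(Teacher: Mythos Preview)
Your proof is correct and follows essentially the same approach as the paper: both construct an injection $T(G) \to T(\mathcal{S}_{i,j}(G))$ by fixing triangles that survive the shift and replacing $v_j$ by $v_i$ in those that do not. Your version is more explicit in verifying injectivity and in checking that the substituted triangle actually lies in $T(G')$, whereas the paper leaves these details to the reader.
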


\begin{proof}
We establish an injective map from $T(G)$ to $T(\mathcal{S}_{i,j}(G))$. 

Let $\tau = \{v_k,v_\ell,v_m\}$ be a set of vertices that span a triangle in $G$.  If $v_j \notin \tau$, then $\tau$ will remain unaffected by $\mathcal{S}_{i,j}$.  Similarly, if $v_i \in \tau$ and $v_j \in \tau$, then $\tau$ also remains unaffected by $\mathcal{S}_{i,j}$.   In either case, $\tau$ is also a triangle in $\mathcal{S}_{i,j}(G)$.  

Thus it remains to consider the case that $\tau = \{v_j,v_k,v_\ell\}$ with $v_i \notin \tau$. If $\{v_i,v_k\}$ and $\{v_i,v_\ell\}$ are edges in $G$, then once again $\tau$ will remain unaffected by $\mathcal{S}_{i,j}$.  Otherwise, $\{v_i,v_k,v_\ell\}$ is a triangle in $\mathcal{S}_{i,j}(G)$ but not in $G$.

\end{proof}

\subsection{Constructible graphs}

In this section we define a family of graphs called constructible graphs, which arise as graphs of 2-dimensional PS ear-decomposable simplicial complexes.  We will use the following graph theoretical notation: for a vertex $v$ in a graph $G$, the \textbf{degree} of $v$ will be denoted as $\deg(v) = \deg_G(v)$ and $\mathcal{N}(v) = \mathcal{N}_G(v) = \{u \in V(G): \{u,v\} \in E(G)\}$ will denote the \textbf{neighborhood} of $v$.  For $W \subseteq V(G)$, the \textbf{restriction} of $G$ to $W$ is the graph $G|_W$, with vertex set $W$ and edge set $\{\{u,v\} \in E(G) \ : \ u,v \in W\}$.

\begin{definition}
Let $G$ be a simple graph on vertex set $\{v_1,v_2,\ldots,v_n\}$ with a subset of edges labeled by elements of $[n] \cup \{0\}$.  We say $G$ is \textbf{constructible} if one of the following conditions is satisfied: 
\begin{enumerate}
\item $G = K_4$ with all edges labeled $0$, 
\item there exists a vertex $v_\ell \in [n]$ such that $\deg(v_\ell) = 3$ or $\deg(v_\ell) = 4$, all edges incident to vertex $v_\ell$ have label $\ell$, and $G-v_\ell$ is constructible, or
\item there exists an unlabeled edge $v_iv_j \in G$ such that $G-v_iv_j$ is constructible.
\end{enumerate}
\end{definition}

Viewing this recursive definition as an inductive one, constructible graphs are obtained from the complete graph $K_4$ with  edges labeled $0$ through a sequence of three possible operations: adding a new vertex $v_\ell$ of degree three or four, all of whose edges are labeled $\ell$, or inserting an unlabeled missing edge.  Given a constructible graph, one can roughly see the process through which it was constructed (up to reordering) because the edges labeled $\ell > 0$ specify which edges were created at the same time as vertex $v_\ell$ and all other edges were either missing edges that were inserted (unlabeled) or edges that were part of the initial $K_4$ (labeled $0$). 

Constructible graphs are relevant to us because the graph of a $2$-dimensional PS ear-decomposable simplicial complex is constructible. Now we wish to bound the number of triangles in a constructible graph in terms of the types of moves that were used in its construction.  For convenience, we will say that an \textbf{A-move} on a constructible graph  consists of adding a new vertex of degree 3 with appropriately labeled edges, a \textbf{B-move} adds a new vertex of degree 4 with appropriately labeled edges, and an \textbf{E-move} adds an unlabeled missing edge. For a vertex $v \in V(G)$ that is not part of the initial $K_4$, we define its \textbf{type}, $\type(v)$, to equal 3 or 4 depending on the degree of $v$ when it is created. 

Classically, the Kruskal-Katona Theorem tells us that, among all simple graphs with a given number of vertices and edges, the one with the maximal number of triangles is obtained by adding edges reverse lexicographically.  Such graphs are known as \textbf{compressed graphs}.  Of course, this construction may lead to a graph with a large number of isolated vertices, which is not suitable to our definition of constructible graphs.  Therefore, we modify this definition to better suit our needs. 

Let $G$ be a constructible graph, and let $a$, $b$, and $e$ respectively denote the number of A-, B-, and E-moves used in constructing $G$. We define the \textbf{compression} $\mathcal{C}(G)$ to be the  constructible graph on vertex set $\{v_1,v_2,\ldots, v_{4+a+b}\}$ that is built as follows: 

\begin{enumerate}
\item Begin with the complete graph $K_4$ on vertex set $\{v_1,v_2,v_3,v_4\}$ with edges labeled $0$. 
\item For $5 \leq \ell \leq 4+b$, perform a B-move to add vertex $\ell$ and edges $\{v_1,v_\ell\}$, $\{v_2,v_\ell\}$, $\{v_3,v_\ell\},$ $\{v_4,v_\ell\}$, all labeled $\ell$. 
\item For $5+b \leq \ell \leq 4+b+a$, perform an A-move to add vertex $v_\ell$ and edges $\{v_1,v_\ell\}$, $\{v_2,v_\ell\}$, $\{v_3,v_\ell\}$, all labeled $\ell$. 
\item Perform E-moves to insert the $e$ smallest missing edges in reverse lexicographic order. 
\end{enumerate}

Given this definition, we can make the connection to the classical Kruskal-Katona theorem more precise.  For graphs, the Kruskal-Katona theorem says that, among all graphs on $n$ vertices with $m$ edges, the graph on vertex set $\{v_1,\ldots,v_n\}$ whose edges are the first $m$ edges in revlex order has the largest number of triangles. The structure of such a graph can be described simply -- we can uniquely express $m = \binom{p}{2} + q$ with $0 \leq q < p$, and the compressed graph $G$ has the following properties: 
\begin{itemize}
\item $G$ contains a clique on $\{v_1,\ldots,v_p\}$,
\item $G$ contains edges $\{v_i,v_{p+1}\}$ for $1 \leq i \leq q$, and 
\item the vertices $v_{p+2},\ldots,v_n$ are isolated.
\end{itemize} 

Similarly, a compressed constructible graph comes equipped with a vertex order $v_1,\ldots,v_n$ such that 
\begin{itemize}
\item $\type(v_5) \geq \type(v_6) \geq \cdots \geq \type(v_n)$, 
\item the maximal clique spans vertices $\{v_1,\ldots,v_p\}$ for some $p \geq 4$,
\item if $\deg(v_{p+1}) = q$, then $v_{p+1}$ is adjacent to the vertices $v_1,\ldots,v_q$, and
\item $v_i$ is adjacent to the first $\type(v_i)$ vertices for all $i > p+1$. 
\end{itemize}

This connection to the classical Kruskal-Katona Theorem will be made precise in Theorem \ref{thm:compressiontriangles}, which states that $\mathcal{C}(G)$ has  at least as many triangles as $G$ when $G$ is constructible.  The remainder of this section focuses on the proof of Theorem \ref{thm:compressiontriangles}, which requires a few intermediate lemmas.

\begin{lemma} \label{lemma:moveedges}
Let $G$ be a constructible graph.  Assume that $G$ contains a clique on the vertices $\{v_1,\ldots,v_p\}$.  Let $u$ and $v$ be vertices not among $\{v_1,\ldots,v_p\}$ such that 
\begin{itemize}
\item $\mathcal{N}(u) \subseteq \{v_1,\ldots,v_p\}$, 
\item $\mathcal{N}(v) \subseteq \{v_1,\ldots,v_p\}$, 
\item $u$ and $v$ are not adjacent, and
\item $\deg_G(u) \leq \deg_G(v)$.
\end{itemize}
Let $k = \min\{\deg_G(u) - \type(u), p-\deg_G(v)\}$, and let $G'$ be the graph obtained from $G$ by removing $k$ unlabeled edges incident to $u$ and adding $k$ unlabeled edges incident to $v$ whose neighbors lie among $\{v_1,\ldots,v_p\}$.  Then $G'$ is constructible and $\#T(G) \leq \#T(G')$. 
\end{lemma}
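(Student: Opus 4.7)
The plan is to split the argument into two independent parts: verifying that $G'$ is constructible by modifying the construction sequence of $G$, and computing the change in triangle count directly using the clique hypothesis. Since $u$ and $v$ both have their neighborhoods entirely inside the complete graph on $\{v_1,\ldots,v_p\}$, triangles at these vertices count as pure binomial coefficients, which turns the triangle comparison into a short binomial identity.

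For constructibility, I would take any construction sequence that produces $G$ and modify it as follows: perform all A-moves and B-moves in the original order, so in particular $u$, $v$, and $v_1,\ldots,v_p$ are added exactly as before (with their labeled edges intact). The hypothesis $k \leq \deg_G(u) - \type(u)$ guarantees that at least $k$ of the E-moves in the original sequence added unlabeled edges at $u$; simply omit the $k$ E-moves corresponding to the chosen edges to be removed. After all original A-, B-, and E-moves have been processed (with those $k$ omitted), append $k$ new E-moves, each adding one of the new unlabeled edges from $v$ to a previously non-adjacent vertex of $\{v_1,\ldots,v_p\}$; such vertices exist because $k \leq p - \deg_G(v)$. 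Each intermediate graph is constructible (removing or deferring E-moves preserves constructibility, and new E-moves are legal whenever both endpoints are present), so $G'$ is constructible.

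For the triangle count, the clique hypothesis makes the bookkeeping clean: since $\mathcal{N}_G(u) \subseteq \{v_1,\ldots,v_p\}$ and the vertices $v_1,\ldots,v_p$ are pairwise adjacent, every pair of neighbors of $u$ spans an edge, so $u$ lies in exactly $\binom{\deg_G(u)}{2}$ triangles, and analogously $v$ lies in exactly $\binom{\deg_G(v)}{2}$ triangles; the same counts hold in $G'$ with the updated degrees $\deg_G(u)-k$ and $\deg_G(v)+k$. Because $u$ and $v$ are non-adjacent in both $G$ and $G'$, no triangle contains both, and triangles containing neither are unaffected by the modification. Hence
\[
\#T(G') - \#T(G) \;=\; \binom{\deg_G(u)-k}{2} - \binom{\deg_G(u)}{2} + \binom{\deg_G(v)+k}{2} - \binom{\deg_G(v)}{2}.
\]
Applying $\binom{a+1}{2} - \binom{a}{2} = a$ and telescoping, this simplifies to $k\bigl(\deg_G(v) - \deg_G(u) + k\bigr)$, which is non-negative since $\deg_G(u) \leq \deg_G(v)$ and $k \geq 0$. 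I do not expect any serious obstacles; the only subtlety is the small verification that removing the specific unlabeled edges at $u$ does not interfere with later A- or B-moves, but this is immediate because A- and B-moves depend only on the presence of labeled edges and the vertices themselves, not on any unlabeled edges.
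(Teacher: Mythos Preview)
Your argument is correct and matches the paper's approach: both observe that constructibility is preserved because only unlabeled edges are altered, and both compute $\#T(G')-\#T(G)$ as the binomial difference $\binom{\deg_G(u)-k}{2}+\binom{\deg_G(v)+k}{2}-\binom{\deg_G(u)}{2}-\binom{\deg_G(v)}{2}=k(\deg_G(v)-\deg_G(u))+k^2\ge 0$, using that $u,v$ are non-adjacent with neighborhoods inside the clique. Your constructibility justification is slightly more explicit than the paper's one-line remark, but the substance is identical.
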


\begin{proof}
The number $\deg_G(u) - \type(u)$ is the number of unlabeled edges incident to $u$ and the number $p-\deg_G(v)$ is the number of missing edges between $v$ and vertices among $\{v_1,\ldots,v_p\}$.  So $k$ is the largest number of unlabeled edges that could be moved from $u$ to $v$.  The fact that $G'$ is constructible is immediate as we are only changing the unlabeled edges in $G$. 

Because $\{v_1,\ldots,v_p\}$ span a clique, every pair of vertices in the neighborhood of $u$ (respectively, $v$) form a triangle with $u$ (respectively $v$).  Because of this, and because $u$ and $v$ are not adjacent, we see that
\begin{eqnarray*}
\#T(G') - \#T(G) &=& \binom{\deg_G(u)-k}{2} + \binom{\deg_G(v)+k}{2} - \binom{\deg_G(u)}{2} - \binom{\deg_G(v)}{2} \\
&=& (\deg_G(v)-\deg_G(u))\cdot k+k^2 \geq 0.
\end{eqnarray*}

\end{proof}

\begin{lemma} \label{lemma:neighborhoodbound}
Let $G$ be a constructible graph, and let $W \subseteq V(G)$ be a subset of vertices.  Then $$\#E\left(G|_W\right) \leq \#E\left(\mathcal{C}(G)|_{\{v_1,\ldots,v_{|W|}\}}\right).$$
\end{lemma}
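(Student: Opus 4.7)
The plan is to induct on the number $e$ of unlabeled edges (E-moves) used to construct $G$. Throughout, I write $n = |W|$; both $V(G)$ and $V(\mathcal{C}(G))$ equal $\{v_1, \ldots, v_{4+a+b}\}$, though the labels on $G$ need not match the canonical labels used by $\mathcal{C}$.

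For the base case $e = 0$, fix any valid construction order on $V(G)$ (in which each non-$K_4$ vertex appears after its labeled neighbors) and list the vertices of $W$ in that order as $w_1, \ldots, w_n$. Write $d^-(v)$ for the number of earlier-constructed neighbors of $v$, so $d^-(v) \le 3$ for $K_4$-vertices and $d^-(v) = \type(v) \in \{3,4\}$ for A/B-move vertices. Counting each edge of $G|_W$ by its later endpoint,
$$\#E(G|_W) \le \sum_{i=1}^n \min\bigl(d^-(w_i),\, i-1\bigr).$$
For $i \le 4$ the $i$-th summand is at most $i-1$ (contributing at most $\binom{\min(n,4)}{2}$ in total), and for $i \ge 5$ the vertex $w_i$ must be an A- or B-move (since only four $K_4$-vertices exist), so the summand equals $\type(w_i)$. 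Since at most $n-4$ and at most $b$ of these later positions can be B-moves, this yields
$$\#E(G|_W) \le 6 + 3(n-4) + \min(b,\, n-4),$$
and a direct inspection of the three regimes $n \le 4$, $4 < n \le 4+b$, and $4+b < n \le 4+a+b$ shows this equals $\#E\bigl(\mathcal{C}(G)|_{\{v_1,\ldots,v_n\}}\bigr)$.

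For the inductive step, choose any unlabeled edge $\{v_i, v_j\}$ of $G$ and let $G' := G \setminus \{v_i, v_j\}$, which is constructible with $e-1$ E-moves. Since $\mathcal{C}$ adds E-edges in revlex order, $\mathcal{C}(G) = \mathcal{C}(G') + \{v_k, v_l\}$, where $\{v_k, v_l\}$ (with $k < l$) is the revlex-smallest missing edge of $\mathcal{C}(G')$. The inductive hypothesis gives $\#E(G'|_W) \le \#E\bigl(\mathcal{C}(G')|_{\{v_1,\ldots,v_n\}}\bigr)$, and the claim follows from case analysis on whether $\{v_i, v_j\} \subseteq W$ and whether $\{v_k, v_l\} \subseteq \{v_1,\ldots,v_n\}$; in three of the four cases the inductive inequality immediately delivers the result.

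The one nontrivial case, and the main obstacle, is when $\{v_i, v_j\} \subseteq W$ (so $\#E(G|_W)$ gains one edge over $\#E(G'|_W)$) while $\{v_k, v_l\} \not\subseteq \{v_1,\ldots,v_n\}$ (so the right-hand side gains nothing). Here, since $\{v_k, v_l\}$ is the revlex-smallest missing edge of $\mathcal{C}(G')$ and $l > n$, every pair $\{v_a, v_b\}$ with $a < b \le n$ must already be an edge of $\mathcal{C}(G')$ (otherwise it would be revlex-smaller than $\{v_k, v_l\}$), forcing $\mathcal{C}(G')|_{\{v_1,\ldots,v_n\}} = K_n$ with $\binom{n}{2}$ edges. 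Meanwhile $G'|_W$ is missing the edge $\{v_i, v_j\} \subseteq W$, so $\#E(G'|_W) \le \binom{n}{2}-1$, and we conclude $\#E(G|_W) \le \binom{n}{2} = \#E\bigl(\mathcal{C}(G')|_{\{v_1,\ldots,v_n\}}\bigr) \le \#E\bigl(\mathcal{C}(G)|_{\{v_1,\ldots,v_n\}}\bigr)$.
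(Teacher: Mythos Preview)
Your proof is correct and follows essentially the same two-part strategy as the paper: reduce to a ``no unlabeled edges'' situation by an inductive argument exploiting that $\mathcal{C}$ inserts E-edges in revlex order (so if the new compressed edge misses $\{v_1,\ldots,v_n\}$, that restriction is already complete), and then handle the labeled-edges-only situation by counting in-degrees along a construction order and using that B-moves precede A-moves in $\mathcal{C}(G)$. The only organizational difference is the induction parameter: the paper inducts on $\#E(G|_W)$ and removes an unlabeled edge lying in $W$, landing in the case where $G|_W$ (but not necessarily $G$) has only labeled edges, whereas you induct on the total number $e$ of E-moves and land in the case $e=0$ where $G$ itself has no unlabeled edges---this makes your base-case description of $\mathcal{C}(G)$ slightly cleaner, but the substance of both arguments is the same.
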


\begin{proof}
We prove the claim by induction on the number of edges in $G|_W$.  The claim is trivial when there are no edges. 

First suppose that there exists an unlabeled edge $e \in G|_W$. By the inductive hypothesis, 
$$\#E\left((G-e)|_W\right) \leq \#E\left(\mathcal{C}(G-e)|_{\{v_1,\ldots,v_{|W|}\}}\right).$$  
To obtain $\mathcal{C}(G)$ from $\mathcal{C}(G-e)$, we add the revlex smallest missing edge. If the revlex smallest missing edge is $\{v_i,v_j\}$ and $j \leq |W|$, then 
$$\#E(G|_W) = \#E\left((G-e)|_W\right) + 1 \leq \#E\left(\mathcal{C}(G-e)|_{\{v_1,\ldots,v_{|W|}\}}\right) + 1 = \#E\left(\mathcal{C}(G)|_{\{v_1,\ldots,v_{|W|}\}}\right).$$
Otherwise, if the revlex smallest missing edge is $\{v_i,v_j\}$ and $j > |W|$, then $\mathcal{C}(G)|_{\{v_1,\ldots,v_{|W|}\}}$ is complete, in which case 
$$\#E\left(G|_W\right) \leq \#E\left(\mathcal{C}(G)|_{\{v_1,\ldots,v_{|W|}\}}\right)$$
holds trivially. 

Thus, we are left to handle the case that $G|_W$ contains only labeled edges. Order the vertices in $W$ as $v_{i_1},v_{i_2},\ldots,v_{i_{|W|}}$ in order of their creation.  In $G$, there is at most one edge whose largest vertex is $v_{i_2}$, at most two edges whose largest vertex is $v_{i_3}$, at most three edges whose largest vertex is $v_{i_4}$, and at most $\type(v_{i_j})$ edges whose largest vertex is $v_{i_j}$ for $j > 4$.  The same is true in $\mathcal{C}(G)$, and because the vertices of $\mathcal{C}(G)$ are ordered so that vertices of type $4$ come before vertices of type $3$, it must be the case that 
$$\#E\left(G|_W\right) \leq \#E\left(\mathcal{C}(G)|_{\{v_1,\ldots,v_{|W|}\}}\right).$$
\end{proof}

\begin{theorem} \label{thm:compressiontriangles}
Let $G$ be a constructible graph.  Then $$\#T(G) \leq \#T(\mathcal{C}(G)).$$
\end{theorem}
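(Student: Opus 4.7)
I would prove Theorem~\ref{thm:compressiontriangles} by induction on the number $e$ of E-moves used in the construction of $G$.

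\textit{Base case $e = 0$.} I will count triangles by their maximum-indexed vertex. For each vertex $v_\ell$ with $\ell > 4$, the lower neighborhood $\mathcal{N}(v_\ell) \cap \{v_1, \ldots, v_{\ell - 1}\}$ consists of exactly the $\type(v_\ell) \in \{3, 4\}$ creation neighbors of $v_\ell$ (no unlabeled edges exist), so the number of triangles having $v_\ell$ as maximum vertex is at most $\binom{\type(v_\ell)}{2}$. Summing and adding the four $K_4$-triangles gives
$$\#T(G) \leq 4 + \sum_{\ell > 4} \binom{\type(v_\ell)}{2} = 4 + 6b + 3a.$$
In $\mathcal{C}(G)$ every B-move vertex attaches to the clique $\{v_1, v_2, v_3, v_4\}$ (contributing $\binom{4}{2} = 6$ triangles) and every A-move vertex to $\{v_1, v_2, v_3\}$ (contributing $\binom{3}{2} = 3$), so this upper bound is attained: $\#T(\mathcal{C}(G)) = 4 + 6b + 3a$.

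\textit{Inductive step $e \geq 1$.} Pick an unlabeled edge $\epsilon = \{u, v\} \in E(G)$. Then $G - \epsilon$ is constructible with $e - 1$ unlabeled edges, and $\mathcal{C}(G)$ is obtained from $\mathcal{C}(G - \epsilon)$ by adjoining the revlex-smallest missing edge $\epsilon' = \{u', v'\}$. Splitting each triangle count into base plus new contribution,
$$\#T(G) = \#T(G - \epsilon) + |\mathcal{N}_{G - \epsilon}(u) \cap \mathcal{N}_{G - \epsilon}(v)|,$$
$$\#T(\mathcal{C}(G)) = \#T(\mathcal{C}(G - \epsilon)) + |\mathcal{N}_{\mathcal{C}(G - \epsilon)}(u') \cap \mathcal{N}_{\mathcal{C}(G - \epsilon)}(v')|.$$
The inductive hypothesis controls the baseline term on each side. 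For the common-neighbor terms, I would first use Lemma~\ref{lemma:shifttriangles} to shift $\epsilon$ and the remaining unlabeled edges of $G$ toward revlex-extremal positions without decreasing $\#T(G)$, then apply Lemma~\ref{lemma:moveedges} (with $\{v_1, v_2, v_3, v_4\}$ as the clique) to concentrate unlabeled edges at vertices whose neighborhoods already lie inside the initial $K_4$, these operations preserving the parameters $(a,b,e)$. Lemma~\ref{lemma:neighborhoodbound} then bounds the common-neighbor count $|\mathcal{N}_{G - \epsilon}(u) \cap \mathcal{N}_{G - \epsilon}(v)|$ by an initial-segment edge count of $\mathcal{C}(G - \epsilon)$, which matches the revlex contribution from $\epsilon'$.

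The main obstacle is the amortization between the common-neighbor terms and the inductive slack $\#T(\mathcal{C}(G-\epsilon)) - \#T(G-\epsilon)$: for particular examples (e.g.\ one can construct a $(a,b,e) = (1,1,1)$ instance) the common-neighbor contribution from $\epsilon$ in $G-\epsilon$ strictly exceeds that from $\epsilon'$ in $\mathcal{C}(G-\epsilon)$, and the excess must be absorbed by the inductive slack rather than handled one edge at a time. Making this precise will likely require either a strengthened inductive statement that tracks how the slack distributes across candidate edge positions, or working with the full set of E-moves at once via an iterated revlex shuffle whose individual steps are realized by Lemma~\ref{lemma:moveedges} within the constructible category and are monotone in $\#T$.
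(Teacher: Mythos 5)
Your proposal takes a genuinely different route from the paper's: you induct on the number $e$ of E-moves, peeling off unlabeled edges, whereas the paper inducts on the number of \emph{vertices}, peeling off a vertex $v$ that was not part of the initial $K_4$. Your base case ($e=0$) is clean and correct. The difficulty, as you yourself identify, is the inductive step, and I want to pin down why it does not close as written.

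When you remove a vertex $v$, the triangle deficit $\#T(G) - \#T(G-v)$ is exactly $\#E(G|_{\mathcal{N}(v)})$, which is precisely the quantity that Lemma~\ref{lemma:neighborhoodbound} controls. When you remove an \emph{edge} $\epsilon = \{u,v\}$, the deficit is $|\mathcal{N}_{G-\epsilon}(u) \cap \mathcal{N}_{G-\epsilon}(v)|$, a common-neighbor count, which is not an edge count of any vertex-restriction and so is not directly addressed by Lemma~\ref{lemma:neighborhoodbound}. Your appeal to that lemma to bound the common-neighbor term is therefore a genuine gap, not merely a matter of careful bookkeeping. Beyond that, you acknowledge (and correctly) that for some constructible $G$ the common-neighbor count of the removed $\epsilon$ in $G-\epsilon$ can strictly exceed the revlex contribution of $\epsilon'$ in $\mathcal{C}(G-\epsilon)$, so the step-by-step comparison fails and the excess must be amortized against the inductive slack; that amortization is the heart of the theorem and is not supplied here. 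Finally, note that the shifting operator $\mathcal{S}_{i,j}$ of Lemma~\ref{lemma:shifttriangles} operates on arbitrary simple graphs and need not preserve constructibility (and hence need not preserve the labeling structure or the meaning of $\mathcal{C}(\cdot)$), while Lemma~\ref{lemma:moveedges} preserves constructibility but only under the stringent hypothesis that both endpoints have all their neighbors in a fixed clique and are non-adjacent; using these two to ``normalize'' $G$ before removing $\epsilon$ requires a concrete argument that the normalized graph has the same $(a,b,e)$ data and the same $\mathcal{C}(G)$, which is not given.

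The paper sidesteps all of this by peeling a vertex $v$ (chosen to have type $4$ if one exists). Then $\#T(G) \leq \#T(\mathcal{C}(G-v)) + \#E(\mathcal{C}(G-v)|_{\{v_1,\ldots,v_{\deg v}\}})$ follows immediately from the inductive hypothesis plus Lemma~\ref{lemma:neighborhoodbound}, and the remaining work is a positional case analysis on $\deg(v)$ versus the clique size $p$ of $\mathcal{C}(G-v)$, carried out entirely with Lemma~\ref{lemma:moveedges}, which applies because the misplaced unlabeled edges attach $v$ and $v_{p+1}$ to vertices inside the clique $\{v_1,\ldots,v_p\}$. Your E-move induction would need either a strengthened inductive invariant that tracks the slack by edge position, or a one-shot ``iterated revlex shuffle'' that you would have to realize by constructibility-preserving moves; neither is yet in place.
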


\begin{proof}
We prove the claim by induction on the number of vertices in $G$.  When $\#V(G) = 4$, the claim is clear as $G$ and $\mathcal{C}(G)$ are both the complete graph.  Therefore, we may assume $\#V(G) = n+1 > 4$. 

If possible, pick a vertex $v \in V(G)$ with $\type(v) = 4$.  Otherwise, all vertices in $G$ that are not part of the original $K_4$ have type 3; pick one of those vertices arbitrarily. Let $\delta = \deg_G(v)$.  Note that the number of triangles in $G$ that contain $v$ is equal to the number of edges in $G|_{\mathcal{N}(v)}$.  Therefore, by the inductive hypothesis and Lemma \ref{lemma:neighborhoodbound}, 
\begin{eqnarray}
\#T(G) &=& \#T(G-v) + \#E(G|_{\mathcal{N}(v)}) \nonumber \\
\label{eq1}
&\leq& \#T(\mathcal{C}(G-v)) + \#E\left(\mathcal{C}(G-v)|_{\{v_1,\ldots,v_\delta\}}\right).
\end{eqnarray}

Let $G'$ be the graph obtained from $\mathcal{C}(G-v)$ by adding vertex $v$, along with edges $\{v,v_i\}$ for $1 \leq i \leq \delta$, the first $\type(v)$ of which receive label $v$.  This label will be temporary, as we need to determine the position where $v$ should be inserted into the given order on the vertices of $\mathcal{C}(G-v)$. By construction, 
\begin{equation} \label{eq2}
\#T(G') = \#T(\mathcal{C}(G-v)) + \#E\left(\mathcal{C}(G-v)|_{\{v_1,\ldots,v_d\}}\right).
\end{equation}

What is the structure of $\mathcal{C}(G-v)$? Its vertices are ordered $v_1,v_2,\ldots,v_{n}$ in such a way that $\type(v_5) \geq \type(v_6) \geq \cdots \geq \type(v_n)$.  Moreover, there exists an integer $4 \leq p \leq n$ such that
\begin{itemize}
\item  $\{v_1,\ldots,v_p\}$ span a clique, 
\item $\deg(v_{p+1}) < p$ (meaning $\{v_1,\ldots,v_{p+1}\}$ do not span a clique), 
\item $\mathcal{N}(v_i) \subseteq \{v_1,\ldots,v_p\}$ for all $i > p$, and
\item $\deg(v_i) = \type(v_i)$ for all $i > p+1$. 
\end{itemize}
Moreover, for all $i > 4$, the labeled edges incident to $v_i$ are the those incident to the first $\type(v_i)$ vertices --- either $\{v_1,v_2,v_3\}$ or $\{v_1,v_2,v_3,v_4\}$. 

By Eqs \eqref{eq1} and \eqref{eq2}, it follows that $\#T(G) \leq \#T(G')$, so we need only show that $\#T(G') \leq \#T(\mathcal{C}(G))$. We examine several cases. 

\ \\  \textit{Case 1:} $\deg(v) = p$ \\

In this case, $\mathcal{C}(G)$ can be obtained from $G'$ in two steps: 

First, insert vertex $v$ into the given vertex order so that $v_4 < v < v_5$.  Second, if necessary, replace the revlex largest unlabeled edge $\{v_k,v_{p+1}\}$ with the unlabeled edge $\{v,v_{p+1}\}$.  Whether $\type(v) = 4$ or all vertices have type 3, this order respects the rule that all vertices of type 4 come before the vertices of type 3.  Moreover, $\mathcal{C}(G)$ will contain a clique of size $p+1$ on $\{v,v_5,\ldots,v_p\}$, with unlabeled edges $\{v,v_{p+1}\}, \{v_5,v_{p+1}\}, \ldots, \{v_{k-1},v_{p+1}\}$ being the remaining revlex smallest edges under this new vertex order. Neither of these operations changes the number of triangles in the graph, and hence $\#T(G') = \#T(\mathcal{C}(G))$.

\ \\  \textit{Case 2:} $\deg(v) > p$ \\

As above, let $\delta = \deg(v)$ and  insert $v$ into the given vertex order so that $v_4 < v < v_5$. Having done this, the graph $G'$ fails to be compressed because there are unlabeled edges of the form $\{v,v_i\}$ for $p+2 \leq i \leq \delta$ that are not the revlex smallest edges that could have been added. However, it is still the case that $\deg_{G'}(v_{p+1}) \geq \deg_{G'}(v_{p+2}) \geq \cdots \geq \deg_{G'}(v_\delta)$.  Therefore, we may repeatedly apply Lemma \ref{lemma:moveedges}, first to the pair of vertices $\{v_{p+1},v_\delta\}$, then $\{v_{p+1},v_{\delta-1}\}$, and so on, each time moving the unlabeled edge $\{v,v_k\}$ for $k > p+1$ to the revlex smallest missing edge incident to $v_{p+1}$.  Each of these moves weakly increases the number of triangles in the graph, and hence $\#T(G') \leq \#T(\mathcal{C}(G))$ as desired.  We note that at some point in this process, it could be the case that $v_{p+1}$ becomes adjacent to all vertices among $\{v,v_5,\ldots,v_p\}$.  In this case, we simply start moving unlabeled edges so that they form the revlex smallest missing edge incident to $v_{p+2}$ instead (and, if necessary, then to $v_{p+3}$, $v_{p+4}$, etc.).  

\ \\  \textit{Case 3:} $\deg(v) < p$ \\

This is the most complicated of the cases and requires a few sub-cases. 

If $\type(v) = \type(v_{p+1})$, we insert $v$ into the vertex order on $\mathcal{C}(G-v)$ as follows: if $\deg(v) \geq \deg(v_{p+1})$, then $v_p < v < v_{p+1}$; otherwise, if $\deg(v_{p+1}) > \deg(v)$, then  $v_{p+1} < v < v_{p+2}$ if .  Then we apply Lemma \ref{lemma:moveedges} to the pair of vertices $\{v,v_{p+1}\}$ to obtain $\mathcal{C}(G)$ and see that $\#T(G') \leq \#T(\mathcal{C}(G))$.

If $\type(v) \neq \type(v_{p+1})$, then by our choice of $v$ it must be the case that $\type(v) = 4$ and $\type(v_{p+1}) = 3$.  If there exists an index $5 \leq i \leq p$ such that $\type(v_i) = 3$, we can pick the smallest such $i$ and perform an operation that declares $\type(v_i) = 4$ and $\type(v) = 3$.  Since $4 < i \leq p$, we know that $\{v_4,v_i\} \in G'$, so we can give that edge the label $i$ and remove the label $v$ from the edge $\{v_4,v\}$.  This does not change the number of trees in $G'$, but now we have arranged for $\type(v) = 3 = \type(v_{p+1})$ and we can apply the argument used in the previous case.

Thus we need only examine the case that $\type(v_p) = \type(v) = 4$, but $\type(v_{p+1}) = 3$. If $\deg(v) \geq \deg(v_{p+1})$, we insert $v$ into the vertex order on $\mathcal{C}(G-v)$ so that $v_p < v < v_{p+1}$.  This preserves the condition that all vertices of type 4 come before the vertices of type 3 in the vertex order, and then we can apply Lemma \ref{lemma:moveedges} to the pair of vertices $\{v,v_{p+1}\}$ to obtain $\mathcal{C}(G)$ from $G'$.  On the other hand, if $\deg(v) < \deg(v_{p+1})$, we first perform the above operation to swap edge labels so that $\type(v) = 3$ and $\type(v_{p+1}) = 4$.  Then we insert $v$ into the vertex order on $\mathcal{C}(G-v)$ so that $v_{p+1} < v < v_{p+2}$ and then apply Lemma \ref{lemma:moveedges}.

\end{proof}

\section{Proofs of the main result} \label{section:main}

Let $\Delta = \Sigma_0 \cup \Sigma_1 \cup \cdots \cup \Sigma_t$ be a $2$-dimensional PS ear-decomposable simplicial complex.  By a slight abuse of notation from the previous section, we will say the PS balls that are added by $\Sigma_i = \sigma^0 * \partial \sigma^2$, $\Sigma_i = \sigma^0 * \partial \sigma^1 * \partial \sigma^1$, $\Sigma_i = \sigma^1 * \partial \sigma^1$, or $\Sigma_i = \sigma^2$, are called ears of type  A, B, E, and F respectively. Similarly, we let $\eta_A$, $\eta_B$, $\eta_E$, and $\eta_F$ respectively denote the number of ears of each type used in constructing $\Delta$.  This implies that 

\begin{eqnarray}
h_0(\Delta) &=& 1 \\
h_1(\Delta) &=& h_1(\Sigma_0) + \eta_A + \eta_B \\
h_2(\Delta) &=& h_2(\Sigma_0) + \eta_A + 2\eta_B + \eta_E \\
h_3(\Delta) &=& 1 + \eta_A + \eta_B + \eta_E + \eta_F.
\end{eqnarray}

In other words, a move of type A contributes $(0,1,1,1)$ to the $h$-vector, a move of type B contributes $(0,1,2,1)$ to the $h$-vector, a move of type E contributes $(0,0,1,1)$, and a move of type F contributes $(0,0,0,1)$ to the $h$-vector. 

Our goal in this section is to prove that the $h$-vector of a PS ear-decomposable simplicial complex is a pure $\mathcal{O}$-sequence.  The proof breaks into three main cases for the three possible base spheres $\Sigma_0$, along with several subcases.  We begin with a broad overview of the main cases and subcases in the the remainder of the paper.

\begin{enumerate}
\item The case that $\Sigma_0 = \partial \sigma^3$ is the boundary of a tetrahedron is the simplest case given Theorem \ref{thm:compressiontriangles}.  Algorithm \ref{tetrabasealg} defines the compressed complex $\mathcal{C}(\Delta)$ and corresponding pure multicomplex $\mathcal{M}(\Delta)$.  The fact that this algorithm terminates and produces a pure multicomplex is proved in Theorem \ref{thm:basespheretet}. 
\item The case that $\Sigma_0 = \partial \sigma^1 * \partial \sigma^2$ is the boundary of a bipyramid requires two subcases.  
\begin{enumerate}
\item When $\eta_F > 0$, we can reduce the problem to the case that $\Sigma_0 = \partial \sigma^3$ in Theorem \ref{thm:basespherebipyramid-F>0}. 
\item When $\eta_F = 0$, Algorithm \ref{bipyramidbasealg} defines the compressed complex $\mathcal{C}(\Delta)$ and corresponding pure multicomplex $\mathcal{M}(\Delta)$.  The fact that this algorithm terminates and produces a pure multicomplex is proved in Theorem \ref{thm:basespherebipyramid-F=0}.
\end{enumerate}
\item The case that $\Sigma_0 = \partial \sigma^1 * \partial \sigma^1 * \partial \sigma^1$ is the boundary of an octahedron is the most complicated case. 
\begin{enumerate}
\item When $\eta_F = 0$, Algorithm \ref{alg:octahedron-F=0} defines the compressed complex $\mathcal{C}(\Delta)$ and corresponding pure multicomplex $\mathcal{M}(\Delta)$.  The fact that this algorithm terminates and produces a pure multicomplex is proved in Theorem \ref{thm:octahedron-F=0}.
\item When $\eta_E = 0$, Algorithm \ref{alg:octahedron-E=0} defines the compressed complex $\mathcal{C}(\Delta)$ and corresponding pure multicomplex $\mathcal{M}(\Delta)$.  The fact that this algorithm terminates and produces a pure multicomplex is proved in Theorem \ref{thm:octahedron-E=0}.
\item When $\eta_F > 0$ and $\eta_E > 0$, we must consider two further subcases based on whether $\{v_1,v_4\} \in \Delta$.  
\begin{enumerate}
\item When $\{v_1,v_4\} \in \Delta$, we reduce the problem to the case that $\Sigma_0 = \partial \sigma^1 * \partial \sigma^2$ is a bipyramid in Proposition \ref{prop:base-oct-withv1v4}. 
\item When $\{v_1,v_4\} \notin \Delta$, we use shifting operators $\mathcal{S}_{i,j}$ to reduce to the case that $\Sigma_0 = \partial \sigma^3$ is a tetrahedron, but with two extra monomials of degree-3 in Theorem \ref{octahedron-to-tetrahedron}.  We then show that two monomials of degree-3 can be removed in Theorem \ref{octahedron-remove-extra-monomials}, which completes the proof. 
\end{enumerate}
 
\end{enumerate}
\end{enumerate}

Throughout the remainder of the paper, we will label the vertices of the tetrahedron, bipyramid, and octahedron as shown in Figure \ref{fig:base-sphere-labels}.

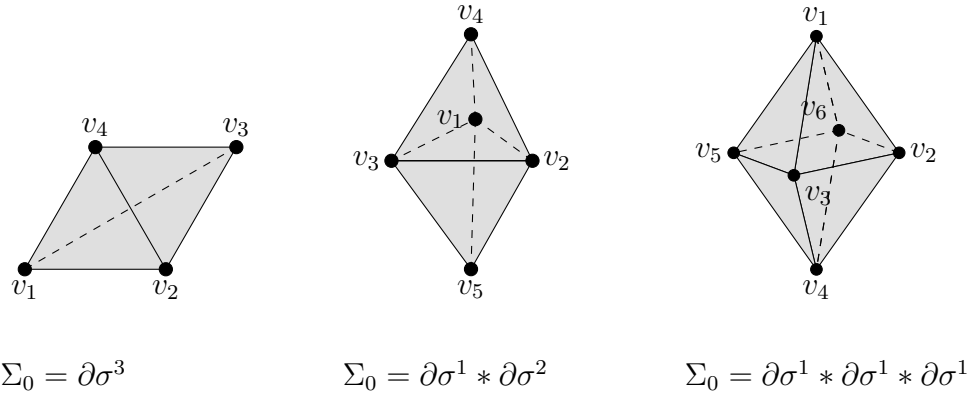
\begin{figure}[h]
\begin{center}
\begin{tabular}{p{.25\textwidth}p{.25\textwidth}p{.25\textwidth}}

\begin{tikzpicture}[scale=1.25]
  \draw[fill=gray!25] (0,0) node[anchor=north] {$v_1$} -- (1.5,0) node[anchor=north] {$v_2$} -- (2.25,1.3) node[anchor=south] {$v_3$} -- (.75,1.3) node[anchor=south] {$v_4$}-- (0,0) ;
  \draw (.75,1.3) -- (1.5,0);
  \draw[dashed] (0,0) -- (2.25,1.3);
  \foreach \p in {(0,0), (1.5,0), (.75,1.3), (2.25,1.3)}{
  	\draw[fill=black] \p circle (.07);
  } 
  
    \end{tikzpicture}

&

\begin{tikzpicture}[scale=1.25]
\def \ts {(.25,1.25,-.25)};
\def \bs{(.25,-1.25,-.25)};
\def \a{(1,0,0)};
\def \b{(-.5,0,0)};
\def \c{(-.05,0,-1.15)};
\draw[fill=gray!25] \ts node[anchor=south] {$v_4$}-- \a node[anchor=west] {$v_2$} -- \b node[anchor=east] {$v_3$} -- \ts;
\draw[fill=gray!25] \bs node[anchor=north]{$v_5$} -- \a -- \b -- \bs;
\draw[fill=black] \a circle (.07);
\draw[fill=black] \b circle (.07);
\draw[fill=black] \c circle (.07) node[anchor=east] {$v_1$};
\draw[fill=black]  \ts circle (.07);
\draw[fill=black] \bs circle (.07);
\draw[dashed] \c -- \ts;
\draw[dashed] \c -- \bs;
\draw \a -- \b;
\draw[dashed] \a -- \c  -- \b;
\end{tikzpicture}
&
\begin{tikzpicture}[scale=1.1]
\draw[fill=gray!25] (1,0,0) -- (0,1.41,0) -- (0,0,.707) -- (1,0,0);
\draw[fill=gray!25] (-1,0,0) -- (0,1.41,0) -- (0,0,.707) -- (-1,0,0);
\draw[fill=gray!25] (1,0,0) -- (0,-1.41,0) -- (0,0,.707) -- (1,0,0);
\draw[fill=gray!25] (-1,0,0) -- (0,-1.41,0) -- (0,0,.707) -- (-1,0,0);
\draw[dashed] (1,0,0) -- (0,0,-.707) -- (-1,0,0);
\draw[dashed] (0,1.41,0) -- (0,0,-.707) -- (0,-1.41,0);
\draw[dashed] (0,1.41,0) -- (0,0,-.707) -- (0,-1.41,0);
\draw[fill=black] (1,0,0) circle (.07) node[anchor=west]{$v_2$};
\draw[fill=black] (-1,0,0) circle (.07) node[anchor=east]{$v_5$};
\draw[fill=black] (0,1.41,0) circle (.07) node[anchor=south]{$v_1$};
\draw[fill=black] (0,-1.41,0) circle (.07) node[anchor=north]{$v_4$};
\draw[fill=black] (0,0,.707) circle (.07) node[anchor=north west]{$v_3$};
\draw[fill=black] (0,0,-.707) circle (.07) node[anchor=south east]{$v_6$};
\end{tikzpicture}
\\
$\Sigma_0 = \partial \sigma^3$ & $\Sigma_0 = \partial\sigma^1 * \partial\sigma^2$ & $\Sigma_0 = \partial \sigma^1 * \partial \sigma^1 * \partial \sigma^1$ 
\end{tabular}
\caption{Vertex labelings for $2$-dimensional PS spheres.}
\label{fig:base-sphere-labels}
\end{center}
\end{figure}

\subsection{The case that $\Sigma_0 = \partial \sigma^3$}

We now present the main algorithm that will be used to prove that $h$-vectors of $2$-dimensional PS ear-decomposable complexes are pure $\mathcal{O}$-sequences when $\Sigma_0 = \partial\sigma^3$. 

\begin{algorithm} \label{tetrabasealg}
Let $\Delta$ be a $2$-dimensional PS ear-decomposable simplicial complex with $\Sigma_0 = \partial \sigma^3$.  We define a new PS ear-decomposable complex $\mathcal{C}(\Delta)$ and a set of monomials $\mathcal{M}(\Delta)$ in the variables $\{x_4,\ldots,x_{\eta_A+\eta_B+4}\}$ inductively as follows. 

\begin{itemize}
\item[Step 0:]
\item Set $\Sigma_0 = \partial \sigma^3$ on vertex set $\{v_1,v_2,v_3,v_4\}$.
\item Set $\mathcal{M}(\Delta) = \{1, x_4,x_4^2,x_4^3\}$. 
\item[Step 1:]
\item For $5 \leq i \leq \eta_B+4$:
\begin{itemize}
\item Introduce vertex $v_i$ to $\mathcal{C}(\Delta)$ through a B-move whose boundary is glued along the cycle $v_1v_2v_3v_4$.

\begin{center}
\begin{tikzpicture}
\draw[red, thick , fill=gray!25]  (0,0) -- (2,0) -- (2,2) -- (0,2) -- (0,0);
\foreach \p in {(0,0), (2,0), (2,2), (0,2)}{
	\draw (1,1) -- \p;
	\draw[red, fill=red] \p circle (.1); 
}
\draw (1,1) node[anchor=west] {$v_i$};
\draw (0,0) node[anchor=north east] {$v_1$};
\draw (2,0) node[anchor=north west] {$v_2$};
\draw (2,2) node[anchor=south west] {$v_3$};
\draw (0,2) node[anchor=south east] {$v_4$};
\draw[fill=black] (1,1) circle (.1);
\end{tikzpicture}
\end{center}
\item Add $\{x_i,x_4x_i,x_i^2,x_4x_i^2\}$ to $\mathcal{M}(\Delta)$.
\end{itemize}

\item For $\eta_B+5 \leq i \leq \eta_A + \eta_B+4$:
\begin{itemize}
\item Introduce vertex $v_i$ to $\mathcal{C}(\Delta)$ through an A-move whose boundary is glued along the cycle $v_1v_2v_3$, 

\begin{center}
\begin{tikzpicture}
\draw[red, thick , fill=gray!25]  (0,0) -- (2,0) -- (1,1.73) -- (0,0);
\foreach \p in {(0,0), (2,0), (1,1.73)}{
	\draw (1,.5) -- \p;
	\draw[red, fill=red] \p circle (.1); 
}
\draw[fill=black] (1,.5) circle (.1) node[anchor=west] {$v_i$};
\draw (0,0) node[anchor=north east] {$v_1$};
\draw (2,0) node[anchor=north west] {$v_2$};
\draw (1,1.73) node[anchor=south] {$v_3$};
\end{tikzpicture}
\end{center}
\item Add $\{x_i,x_i^2,x_i^3\}$to $\mathcal{M}(\Delta)$.
\end{itemize}
\item[Step 2:]
\item For $1 \leq \ell \leq \eta_E$: 
\begin{itemize}
\item Let $\{v_i,v_j\}$ be the revlex smallest missing edge in $\mathcal{C}(\Delta)$.
\item Add the edge $\{v_i,v_j\}$ to $\mathcal{C}(\Delta)$ through an E-move whose boundary is the cycle $v_1v_iv_2v_j$.
\begin{center}\begin{tikzpicture}
  \draw[red, thick, fill=gray!25] (0,0) -- (1.5,0) -- (2.25,1.3) -- (.75,1.3) -- (0,0);
  \draw (.75,1.3) -- (1.5,0);

  \foreach \p in {(0,0), (1.5,0), (.75,1.3), (2.25,1.3)}{
  	\draw[red, fill=red] \p circle (.1);
  } 
  
\draw (0,0) node[anchor=north east] {$v_1$};
\draw (1.5,0) node[anchor=north west] {$v_j$};
\draw (2.25,1.3) node[anchor=south west] {$v_2$};
\draw (.75,1.3) node[anchor=south east] {$v_i$};

    \end{tikzpicture}\end{center}
\item Add $\{x_ix_j,x_i^2x_j\}$ to $\mathcal{M}(\Delta)$.
\end{itemize}
\item[Step 3:] 
\item For $1 \leq \ell \leq \eta_F$:
\begin{itemize}
\item Let $G$ be the revlex smallest missing $2$-face in $\mathcal{C}(\Delta)$.  Add $G$ to $\mathcal{C}(\Delta)$. 
\item Let $\mu$ be the revlex smallest degree-3 monomial not belonging to $\mathcal{M}(\Delta)$ whose proper divisors all belong to $\mathcal{M}(\Delta)$.  Add $\mu$ to $\mathcal{M}(\Delta)$.
\end{itemize}
\end{itemize}
\end{algorithm}

Note first that upon the completion of Step 1, $\mathcal{M}(\Delta)$ is a pure multicomplex containing $x_j^2$ for all $4 \leq j \leq \eta_A + \eta_B + 4$.  The degree-$2$ monomials that do not belong to $\mathcal{M}(\Delta)$ at this point are those of the form $x_4x_j$ for which vertex $v_j$ was introduced through an A-move, along with all of those of the form $x_ix_j$ with $5 \leq i < j \leq \eta_A + \eta_B + 4$.  Thus the number of such monomials is $\eta_A + \binom{\eta_A+\eta_B}{2}$.  But this is exactly the same as the number of missing edges in $\mathcal{C}(\Delta)$ after the completion of Step 1, meaning that Step 2 of the Algorithm will terminate and the resulting multicomplex $\mathcal{M}(\Delta)$ is still a pure multicomplex. 

Now we turn our attention to Step 3, which is slightly more complicated.  Again we count the number of missing $2$-faces in $\mathcal{C}(\Delta)$ and the number of degree-3 monomials not belonging to $\mathcal{M}(\Delta)$ upon the completion of Step 2. 

Let $G$ be the underlying graph of $\Delta$ and $G'$ the underlying graph of $\mathcal{C}(\Delta)$.  By the construction of $\mathcal{C}(\Delta)$ in Steps 0, 1, and 2, $G' = \mathcal{C}(G)$ is a compressed constructible graph.  Let $p$ be the size of the largest clique in $G'$, and let $q$ denote the degree of $v_{p+1}$. Furthermore, let $a'$ be the number of vertices among $\{v_5,\ldots,v_{p+1}\}$ that were introduced through a type-A move and let $a''$ be the number of vertices among $\{v_{p+2},\ldots,v_n\}$ that were introduced through a type-A move.  Define $b'$ and $b''$ similarly for vertices introduced through a type-B move. 

Observe that $p+1 = 4+a'+b'$, so
\begin{equation}\label{pcount}
p = a'+b'+3.
\end{equation} 
Further,
\begin{equation} \label{etaEcount}
\eta_E = \binom{p}{2} + q - 3a'-4b'-6,
\end{equation}
because there are $\binom{p}{2} + q$ edges among the first $p+1$ vertices of $G'$, but $3a'+4b'$ were introduced as part of the A- or B-moves used to create vertices and 6 were part of the initial PS sphere $\Sigma_0$. 

Next, we can directly count that 
\begin{equation}\label{Tcount}
\#T(G') = \binom{p}{3} + \binom{q}{2} + 6b'' + 3a''.
\end{equation}
However, $3\eta_A + 4\eta_B + 2\eta_E + 4$ of those triangles span triangular faces in $\mathcal{C}(\Delta)$ after the completion of Step 2 because A-, B-, and E-moves respectively introduce $3$, $4$, and $2$ triangular faces, and there are $4$ triangular faces in $\Sigma_0$.  Finally, by Theorem \ref{thm:compressiontriangles}, the graph of $\Delta$ cannot have more triangles than $G'$, and $\Delta$ also has $3\eta_A + 4\eta_B + 2\eta_E$ triangular faces that are introduced through A-, B-, and E-moves, and $4$ additional triangular faces in $\Sigma_0$.  Therefore, 

\begin{eqnarray*}
\eta_F &\leq& \#T(G) - 3\eta_A - 4\eta_B - 2\eta_E - 4 \\
&\leq& \#T(G') - 3\eta_A - 4\eta_B - 2\eta_E - 4 \\
&\stackrel{\eqref{Tcount}}{=}& \binom{p}{3} + \binom{q}{2} + 6b''+3a'' - 3(a'+a'') -4(b'+b'') - 2\eta_E - 4 \\
&\stackrel{\eqref{etaEcount}}{=}& \left[\binom{p}{3}-2\binom{p}{2}\right] + \left[\binom{q}{2}-2q\right] +3a'+4b'+2b''+8 \\
&=& \left[\binom{p-2}{3}-3p+4\right] + \left[\binom{q-2}{2}-3\right]+3a'+4b'+2b''+8 \\
&\stackrel{\eqref{pcount}}{=}& \binom{p-2}{3} + \binom{q-2}{2} + b' + 2b''.
\end{eqnarray*}

On the other hand, upon the completion of Step 2, the degree-$2$ monomials in $\mathcal{M}(\Delta)$ support the following degree-$3$ monomials: 
\begin{itemize}
\item the $\binom{p-1}{3}$ monomials of degree $3$ in the variables $x_4,\ldots,x_p$,
\item the $\binom{q-1}{2}$ monomials of degree $3$ in the variables $x_4,\ldots,x_q,x_{p+1}$ that are divisible by $x_{p+1}$,
\item the $a''$ monomials $x_j^3$ corresponding to vertices $v_j$ that were introduced through an A-move for $j > p+1$, and
\item the $3b''$ monomials of the form $\{x_4^2x_j,x_4x_j^2, x_j^3\}$ corresponding to vertices $v_j$ that were introduced through a B-move for $j>p+1$.
\end{itemize}
Use $m = \binom{p-1}{3} + \binom{q-1}{2} + a''+3b''$ denote the number of available monomials. Upon the completion of Step 2, $1+\eta_A+\eta_B+\eta_E$ of these monomials have been added to $\mathcal{M}(\Delta)$, and hence the number of available degree-$3$ monomials that can be added to $\mathcal{M}(\Delta)$ is

\begin{eqnarray*}
m-\eta_A-\eta_B-\eta_E-1 &=& \binom{p-1}{3} + \binom{q-1}{2} + a''+3b''-\eta_A-\eta_B-\eta_E-1 \\
&\stackrel{\eqref{etaEcount}}{=}& \left[\binom{p-1}{3}-\binom{p}{2}\right] + \left[\binom{q-1}{2}-q\right] \\
&& \qquad+ a''+3b''+3a'+4b'-\eta_A-\eta_B+5 \\
&=& \left[\binom{p-2}{3} -2p+3\right] + \left[\binom{q-2}{2}-2\right] \\
&& \qquad+  a''+3b''+3a'+4b'-\eta_A-\eta_B+5 \\
&\stackrel{\eqref{pcount}}{=}& \binom{p-2}{2} + \binom{q-2}{2} + b'+2b''.
\end{eqnarray*}

This tells us that $\eta_F$ is bounded by the number of degree-$3$ monomials that can be added to $\mathcal{M}(\Delta)$ upon the completion of Step 2 while still preserving the property that $\mathcal{M}(\Delta)$ is a multicomplex.  Consequently, Step 3 will terminate and upon its completion, $\mathcal{M}(\Delta)$ will be a pure multicomplex. This proves the following theorem. 

\begin{theorem} \label{thm:basespheretet}
The set of monomials $\mathcal{M}(\Delta)$ output by Algorithm \ref{tetrabasealg} is a pure multicomplex.  Moreover $F(\mathcal{M}(\Delta)) = h(\mathcal{C}(\Delta)) = h(\Delta)$.  Consequently, if $\Delta$ is a 2-dimensional PS ear-decomposable simplicial complex with $\Sigma_0 = \partial \sigma^3$, then $h(\Delta)$ is a pure $\mathcal{O}$-sequence.
\end{theorem}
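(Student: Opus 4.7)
The plan is to verify the three pieces of the theorem in sequence by tracking $\mathcal{C}(\Delta)$ and $\mathcal{M}(\Delta)$ through each step of Algorithm \ref{tetrabasealg} and comparing, at each stage, the $h$-vector contribution of the geometric move to the $F$-vector contribution of the corresponding monomial batch. Step 0 is a direct check: $h(\partial\sigma^3) = (1,1,1,1)$ matches the $F$-vector of $\{1, x_4, x_4^2, x_4^3\}$, and this initial multicomplex is clearly pure. For Step 1, each B-move glued along the $4$-cycle $v_1v_2v_3v_4$ contributes $(0,1,2,1)$ to the $h$-vector and adds the four monomials $\{x_i, x_4 x_i, x_i^2, x_4 x_i^2\}$, all of whose proper divisors lie in $\mathcal{M}(\Delta)$ (because $x_4, x_4^2$ are already present) and which extend the pure multicomplex with a new top-degree element $x_4 x_i^2$. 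Each A-move glued along $v_1 v_2 v_3$ similarly contributes $(0,1,1,1)$ and the three monomials $\{x_i, x_i^2, x_i^3\}$, extending purely.

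The Step 2 check amounts to a bookkeeping match. At the end of Step 1, the degree-$2$ monomials of $\mathcal{M}(\Delta)$ support exactly the degree-$2$ monomials that are either $x_4 x_j$ with $v_j$ coming from a B-move or $x_i^2$ for any $i$. The degree-$2$ monomials not yet in $\mathcal{M}(\Delta)$ but with all proper divisors present are exactly $x_4 x_j$ for $v_j$ from an A-move together with $x_i x_j$ for $5 \le i < j \le \eta_A + \eta_B + 4$; a direct count gives $\eta_A + \binom{\eta_A+\eta_B}{2}$. An identical count of missing edges in $\mathcal{C}(\Delta)$ after Step 1 produces the same number, and the E-move glued along $v_1 v_i v_2 v_j$ realizes each missing edge while contributing the pair $\{x_i x_j, x_i^2 x_j\}$, matching the $h$-vector contribution $(0,0,1,1)$. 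Thus Step 2 terminates and preserves purity.

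The main obstacle is Step 3, where one must bound $\eta_F$ by the number of degree-$3$ monomials available to add to $\mathcal{M}(\Delta)$. Here Theorem \ref{thm:compressiontriangles} is used essentially: the underlying graph $G$ of $\Delta$ is constructible, and the compression $\mathcal{C}(G)$ coincides with the underlying graph $G'$ of $\mathcal{C}(\Delta)$ after Steps 0--2 (by the revlex-greedy choice of E-moves). Each A-, B-, and E-move in $\Delta$ introduces $3$, $4$, and $2$ filled triangular faces, respectively, with $4$ more coming from $\Sigma_0$, so $\eta_F \le \#T(G) - 3\eta_A - 4\eta_B - 2\eta_E - 4 \le \#T(G') - 3\eta_A - 4\eta_B - 2\eta_E - 4$. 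Expressing $\#T(G')$ in closed form via the compressed structure (clique size $p$, degree $q$ of $v_{p+1}$, and the counts $a', a'', b', b''$ of A/B vertices before and after $v_{p+1}$) and using the identities $p = a' + b' + 3$ and $\eta_E = \binom{p}{2} + q - 3a' - 4b' - 6$ that encode Step 2, one simplifies this bound to $\binom{p-2}{3} + \binom{q-2}{2} + b' + 2b''$.

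The counterpart count is the number of degree-$3$ monomials that could be greedily added to $\mathcal{M}(\Delta)$ at the start of Step 3. Partitioning these into monomials supported on $\{x_4, \ldots, x_p\}$, monomials divisible by $x_{p+1}$, the cubes $x_j^3$ for A-vertices past $v_{p+1}$, and the triples $\{x_4^2 x_j, x_4 x_j^2, x_j^3\}$ for B-vertices past $v_{p+1}$, and subtracting the $1 + \eta_A + \eta_B + \eta_E$ degree-$\le 2$ monomials already tallied, the same simplification produces $\binom{p-2}{3} + \binom{q-2}{2} + b' + 2b''$. Hence Step 3 terminates, each F-move contributes $(0,0,0,1)$ to the $h$-vector while adding one degree-$3$ monomial (whose proper divisors are already present by greedy choice), and the resulting $\mathcal{M}(\Delta)$ is a pure multicomplex with $F(\mathcal{M}(\Delta)) = h(\mathcal{C}(\Delta)) = h(\Delta)$. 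The bulk of the technical labor is the binomial manipulation, but the conceptual heart is the application of Theorem \ref{thm:compressiontriangles} to upper-bound $\#T(G)$ by $\#T(G')$.
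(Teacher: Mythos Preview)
Your proof is correct and follows essentially the same approach as the paper: the same bookkeeping for Steps 0--2, the same invocation of Theorem \ref{thm:compressiontriangles} to bound $\eta_F$, the same parameters $p,q,a',a'',b',b''$ and identities, and the same matching count $\binom{p-2}{3}+\binom{q-2}{2}+b'+2b''$ on both sides. One small slip: in your Step 3 paragraph you write ``subtracting the $1+\eta_A+\eta_B+\eta_E$ degree-$\le 2$ monomials already tallied,'' but these are the degree-$3$ monomials already placed in $\mathcal{M}(\Delta)$ by Steps 0--2; the count itself is right.
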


\subsection{The case that $\Sigma_0 = \partial \sigma^1 * \partial\sigma^2$}

The case that $\Sigma_0$ is a bipyramid is very similar to the case that $\Sigma_0$ is the boundary of a tetrahedron.  We can start by labeling the vertices of the bipyramid as in Figure \ref{fig:base-sphere-labels}.

Now we examine two cases.  First suppose $\eta_F > 0$.  If at some point the missing face $\{v_1,v_2,v_3\}$ is filled through an F-move, then we can view $\Delta$ as a PS ear-decomposable complex with $\Sigma_0 = \partial \sigma^3$ by starting with the boundary of the tetrahedron on $\{v_1,v_2,v_3,v_4\}$, then adding vertex $v_5$ through an A-move, and then attaching the remaining ears in order to construct $\Delta$.  On the other hand, if $\{v_1,v_2,v_3\}$ is not filled through an F-move, pick any $2$-face filled through an F-move, remove it from $\Delta$, and fill $\{v_1,v_2,v_3\}$ instead.  This creates a new PS ear-decomposable complex $\Delta'$ with $h(\Delta') = h(\Delta)$.  In either of these cases, we can then apply Theorem \ref{thm:basespheretet} to see that $h(\Delta)$ is a pure $\mathcal{O}$-sequence.  This proves the following theorem. 

\begin{theorem} \label{thm:basespherebipyramid-F>0}
Let $\Delta = \Sigma_0 \cup \Sigma_1 \cup \cdots \cup \Sigma_t$ be a 2-dimensional PS ear-decomposable simplicial complex with $\Sigma_0 = \partial \sigma^1 * \partial \sigma^2$ and $\eta_F > 0$.  There exists a PS ear-decomposable simplicial complex $\Delta' = \Sigma_0' \cup \Sigma_1' \cup \cdots \cup \Sigma_t'$ with $\Sigma_0' = \partial \sigma^3$ and $h(\Delta) = h(\Delta')$.  Consequently, $h(\Delta)$ is a pure $\mathcal{O}$-sequence.
\end{theorem}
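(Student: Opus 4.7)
The plan is to reduce to the tetrahedron case, Theorem \ref{thm:basespheretet}, in two stages: first arrange that the equatorial triangle $\{v_1,v_2,v_3\}$ of the bipyramid $\Sigma_0$ is one of the $\eta_F$ faces filled by F-moves in $\Delta$ (performing a harmless face-swap if necessary), and then re-interpret the ear-decomposition so that it opens with $\partial\sigma^3$ instead of the bipyramid.

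If $\{v_1,v_2,v_3\}$ is not already F-filled, I choose any F-filled $2$-face $G$ (which exists because $\eta_F>0$), delete it from $\Delta$, and insert $\{v_1,v_2,v_3\}$ in its place, forming a new complex $\Delta'$. Since $\Delta$ and $\Delta'$ share the same vertex and edge sets and the same number of $2$-faces, the equality $h(\Delta)=h(\Delta')$ is immediate from the $f$-vector. To see that $\Delta'$ inherits a PS ear-decomposition with the same number of ears, I reuse the ears of $\Delta$ but substitute an F-move filling $\{v_1,v_2,v_3\}$ for the one that filled $G$; this substitute is legitimate because the edges $v_1v_2,v_1v_3,v_2v_3$ already lie in $\Sigma_0$. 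The only delicate point, and the place where real care is needed, is that deleting $G$ must not invalidate any later gluing. Fortunately this holds because in dimension $2$ every ear glues along its $1$-dimensional topological boundary, so no subsequent gluing locus can contain the $2$-face $G$ as a subface.

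Once $\{v_1,v_2,v_3\}$ is F-filled, the structural observation is that $\Sigma_0\cup\{v_1v_2v_3\}$ decomposes as the boundary of the tetrahedron on $\{v_1,v_2,v_3,v_4\}$ (with facets $v_1v_2v_3,v_1v_2v_4,v_1v_3v_4,v_2v_3v_4$) together with an A-move introducing $v_5$ with base triangle $v_1v_2v_3$ (contributing $v_1v_2v_5,v_1v_3v_5,v_2v_3v_5$). I therefore set $\Sigma_0':=\partial\sigma^3$ on $\{v_1,v_2,v_3,v_4\}$, take $\Sigma_1'$ to be the A-move adding $v_5$, and append the remaining ears of $\Delta$ in their original order while omitting the F-move that filled $\{v_1,v_2,v_3\}$. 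The total ear count is preserved — one fewer type-F move and one more type-A move — and the underlying simplicial complex is unchanged, so $h(\Delta')=h(\Delta)$ holds trivially. Theorem \ref{thm:basespheretet} applied to $\Delta'$ then exhibits $h(\Delta)$ as a pure $\mathcal{O}$-sequence.
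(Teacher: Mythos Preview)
Your proof is correct and follows essentially the same approach as the paper: swap an F-filled face for the equatorial triangle $\{v_1,v_2,v_3\}$ if necessary, then rewrite the decomposition as $\partial\sigma^3$ on $\{v_1,v_2,v_3,v_4\}$ followed by an A-move introducing $v_5$, and apply Theorem~\ref{thm:basespheretet}. Your explicit justification that deleting a $2$-face cannot invalidate later gluings (since boundaries of $2$-dimensional PS balls are $1$-dimensional) is a detail the paper leaves implicit.
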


On the other hand, if $\eta_F = 0$, we can use the following algorithm to construct a pure multicomplex whose $F$-vector is the same as $h(\Delta)$. 

\begin{algorithm}\label{bipyramidbasealg}
Let $\Delta$ be a $2$-dimensional PS ear-decomposable simplicial complex with $\Sigma_0 = \partial \sigma^1 * \partial \sigma^2$ and $\eta_F = 0$. We define a new PS ear-decomposable complex $\mathcal{C}(\Delta)$ and a set of monomials $\mathcal{M}(\Delta)$ in the variables $\{x_4,x_5,\ldots,x_{\eta_A+\eta_B+5}\}$ inductively as follows. 

\begin{itemize}
\item[Step 0:]
\item Set $\Sigma_0 = \partial \sigma^1*\partial\sigma^2$ on vertex set $\{v_1,v_2,v_3,v_4,v_5\}$ as shown in Figure \ref{fig:base-sphere-labels}.
\item Set $\mathcal{M}(\Delta) = \{1, x_4,x_5,x_4^2,x_4x_5,x_4^2x_5\}$. 
\item[Step 1:]
\item For $6 \leq i \leq \eta_B+5$:
\begin{itemize}
\item Introduce vertex $v_i$ to $\mathcal{C}(\Delta)$ through a B-move whose boundary is glued along the cycle $v_1v_2v_3v_4$.
\item Add $\{x_i,x_4x_i,x_i^2,x_4x_i^2\}$ to $\mathcal{M}(\Delta)$.
\end{itemize}

\item For $\eta_B+6 \leq i \leq \eta_A + \eta_B+5$:
\begin{itemize}
\item Introduce vertex $v_i$ to $\mathcal{C}(\Delta)$ through an A-move whose boundary is glued along the cycle $v_1v_2v_3$, 
\item Add $\{x_i,x_i^2,x_i^3\}$to $\mathcal{M}(\Delta)$.
\end{itemize}
\item[Step 2:]
\item For $1 \leq \ell \leq \eta_E$: 
\begin{itemize}
\item Let $\{v_i,v_j\}$ be the revlex smallest missing edge in $\mathcal{C}(\Delta)$.
\item Add the edge $\{v_i,v_j\}$ to $\mathcal{C}(\Delta)$ through an E-move whose boundary is the cycle $v_1v_iv_2v_j$.
\item If $\{v_i,v_j\} = \{v_4,v_5\}$, add $\{x_5^2,x_5^3\}$ to $\mathcal{M}(\Delta)$.
\item Else, add $\{x_ix_j,x_i^2x_j\}$ to $\mathcal{M}(\Delta)$.
\end{itemize}
\end{itemize}

\end{algorithm}

The proof that this algorithm terminates and produces a pure multicomplex is identical to the proof that Steps 0, 1, and 2 in Algorithm \ref{bipyramidbasealg} terminate and produce a pure multicomplex.  Upon the completion of Step 1, the number of missing edges in $\mathcal{C}(\Delta)$ is $\binom{\eta_A+\eta_B}{2}  + 2\eta_A + \eta_B + 1$.  Indeed, none of the newly introduced vertices are adjacent, giving $\binom{\eta_A+\eta_B}{2}$ missing edges; each vertex $v_j$ with $j > 5$ is part of a missing edge with $v_5$, and it is part of a missing edge with $v_4$ if it was introduced through an A-move; finally, the edge $\{v_4,v_5\}$ is missing.  On the other hand, the number of missing degree-$2$ monomials in $\mathcal{M}(\Delta)$ upon the completion of Step 1 is 
\begin{eqnarray*}
\binom{2+\eta_A+\eta_B+1}{2} - \eta_A-2\eta_B-2 &=& \binom{\eta_A+\eta_B}{2} + (3\eta_A+3\eta_B+3) - \eta_A-2\eta_B-2 \\
&=& \binom{\eta_A+\eta_B}{2} + 2\eta_A + \eta_B + 1
\end{eqnarray*}
Once again, this guarantees that Step 2 will terminate.  Moreover, $\{v_4,v_5\}$ is the revlex smallest missing edge in $\mathcal{C}(\Delta)$ after the completion of Step 1.  This means that if $\eta_E > 0$, then the first missing edge that is inserted also adds $x_5^2$ to $\mathcal{M}(\Delta)$, at which point $x_j^2 \in \mathcal{M}(\Delta)$ for all $j$.  This guarantees that the monomials added to $\mathcal{M}(\Delta)$ in subsequent iterations of Step 2 preserve the property that $\mathcal{M}(\Delta)$ is a pure multicomplex.  Thus, we have proved the following theorem. 

\begin{theorem} \label{thm:basespherebipyramid-F=0}
The set of monomials $\mathcal{M}(\Delta)$ output by Algorithm \ref{bipyramidbasealg} is a pure multicomplex.  Moreover $F(\mathcal{M}(\Delta)) = h(\mathcal{C}(\Delta)) = h(\Delta)$.  Consequently, if $\Delta$ is a 2-dimensional PS ear-decomposable simplicial complex with $\Sigma_0 = \partial \sigma^1*\partial\sigma^2$ and $\eta_F = 0$, then $h(\Delta)$ is a pure $\mathcal{O}$-sequence.
\end{theorem}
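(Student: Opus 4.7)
The plan is to mirror the argument for Theorem \ref{thm:basespheretet}, verifying at each step of Algorithm \ref{bipyramidbasealg} that $\mathcal{M}(\Delta)$ remains a pure multicomplex and that its $F$-vector advances in lockstep with $h(\mathcal{C}(\Delta))$. First I would check Step 0: the initial set $\{1, x_4, x_5, x_4^2, x_4 x_5, x_4^2 x_5\}$ is exactly the divisor set of $x_4^2 x_5$, hence a pure multicomplex with $F$-vector $(1,2,2,1) = h(\partial \sigma^1 * \partial \sigma^2)$.

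Next I would handle Step 1 iteration by iteration. For a B-move introducing $v_i$, the added monomials $\{x_i, x_4 x_i, x_i^2, x_4 x_i^2\}$ together with the existing $\mathcal{M}(\Delta)$ remain closed under divisibility (since $x_4 \in \mathcal{M}(\Delta)$ already) with new maximal monomial $x_4 x_i^2$; the $F$-vector increment $(0,1,2,1)$ matches the B-move $h$-vector contribution. The A-move case is entirely analogous: $\{x_i, x_i^2, x_i^3\}$ are closed under divisibility, $x_i^3$ is maximal, and the increment $(0,1,1,1)$ matches the A-move $h$-vector contribution.

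Then I would show that Step 2 can complete via a counting check. Upon completion of Step 1, the missing edges in $\mathcal{C}(\Delta)$ number $\binom{\eta_A+\eta_B}{2} + 2\eta_A + \eta_B + 1$ (no two newly-introduced vertices are adjacent; each new vertex is non-adjacent to $v_5$; each new A-vertex is additionally non-adjacent to $v_4$; and the edge $\{v_4,v_5\}$ is missing from $\Sigma_0$), while the degree-2 monomials outside $\mathcal{M}(\Delta)$ total $\binom{\eta_A+\eta_B+3}{2} - (\eta_A + 2\eta_B + 2)$; a short simplification shows the two counts agree. Thus Step 2 cannot exhaust the supply of available degree-2 monomials.

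The heart of the proof, and the part that requires real care, is showing that each individual iteration of Step 2 preserves purity. The crucial observation is that after Step 1 the revlex-smallest missing edge is $\{v_4, v_5\}$, because every other missing edge has its larger vertex at some $v_i$ with $i \geq 6$. Hence if $\eta_E > 0$ the very first iteration triggers the special branch, adding $\{x_5^2, x_5^3\}$ and placing $x_5^2$ into $\mathcal{M}(\Delta)$; at that point $x_j^2 \in \mathcal{M}(\Delta)$ for every $j \in \{4, \ldots, \eta_A+\eta_B+5\}$. Every later E-move adds $\{x_i x_j, x_i^2 x_j\}$ with $i < j$ and $j \geq 6$, whose proper divisors $1, x_i, x_j, x_i^2, x_i x_j$ are all already present, so the multicomplex stays divisor-closed and the new top monomial $x_i^2 x_j$ keeps it pure. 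The main obstacle, and the whole subtlety of the $\eta_F = 0$ case, is precisely this dovetailing: the revlex rule on edges has to supply the square $x_5^2$ before any edge of the form $\{v_5, v_j\}$ with $j \geq 6$ is processed, and the special $\{v_4,v_5\}$ branch in the algorithm exists precisely to guarantee it.
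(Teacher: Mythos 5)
Your proposal is correct and takes essentially the same approach as the paper: the same edge/monomial counting after Step 1, and the same key observation that $\{v_4,v_5\}$ is the revlex-smallest missing edge so that $x_5^2$ enters $\mathcal{M}(\Delta)$ on the first E-move before any $\{v_5,v_j\}$ with $j\geq 6$ is processed. The paper's own writeup is terser (deferring most verification to the analogy with Algorithm \ref{tetrabasealg}), but the mechanism you describe is exactly the one it uses.
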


Theorems \ref{thm:basespherebipyramid-F>0} and \ref{thm:basespherebipyramid-F=0} together handle the case that $\Sigma_0 = \partial\sigma^1 * \partial \sigma^2$. 

\begin{theorem} \label{thm:basespherebipyramid}
Let $\Delta = \Sigma_0 \cup \Sigma_1 \cup \cdots \cup \Sigma_t$ be a $2$-dimensional PS ear-decomposable simplicial complex with $\Sigma_0 = \partial \sigma^1 * \partial \sigma^2$.  Then $h(\Delta)$ is a pure $\mathcal{O}$-sequence.
\end{theorem}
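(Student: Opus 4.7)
The plan is to dispatch this theorem as an immediate case split on whether any missing triangle is filled in the ear-decomposition of $\Delta$, since the two preceding theorems (\ref{thm:basespherebipyramid-F>0} and \ref{thm:basespherebipyramid-F=0}) have already done the real work for each branch.

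First I would observe that given $\Delta = \Sigma_0 \cup \Sigma_1 \cup \cdots \cup \Sigma_t$ with $\Sigma_0 = \partial\sigma^1 * \partial\sigma^2$, every later ear $\Sigma_i$ has exactly one of the four types A, B, E, F. In particular, the count $\eta_F$ of type-F ears is a well-defined nonnegative integer, so one of the two mutually exclusive conditions $\eta_F > 0$ or $\eta_F = 0$ must hold.

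In the first case, Theorem \ref{thm:basespherebipyramid-F>0} produces a PS ear-decomposable complex $\Delta'$ with $\Sigma_0' = \partial\sigma^3$ and $h(\Delta') = h(\Delta)$; applying Theorem \ref{thm:basespheretet} to $\Delta'$ then yields a pure multicomplex with $F$-vector $h(\Delta') = h(\Delta)$, so $h(\Delta)$ is a pure $\mathcal{O}$-sequence. In the second case, Theorem \ref{thm:basespherebipyramid-F=0} directly supplies the required pure multicomplex $\mathcal{M}(\Delta)$ with $F(\mathcal{M}(\Delta)) = h(\Delta)$ via Algorithm \ref{bipyramidbasealg}.

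There is no genuine obstacle here: the theorem is essentially a bookkeeping corollary that records the combined content of the two preceding results, and the only thing to check is that the case split $\eta_F > 0$ versus $\eta_F = 0$ is exhaustive, which is immediate.
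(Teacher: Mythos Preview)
Your proposal is correct and matches the paper's approach exactly: the paper simply notes that Theorems~\ref{thm:basespherebipyramid-F>0} and~\ref{thm:basespherebipyramid-F=0} together exhaust the cases $\eta_F > 0$ and $\eta_F = 0$, which is precisely your case split. The only minor redundancy is that Theorem~\ref{thm:basespherebipyramid-F>0} already invokes Theorem~\ref{thm:basespheretet} internally, so you need not re-apply it.
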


\subsection{The case that $\Sigma_0 = \partial \sigma^1 * \partial \sigma^1 * \partial \sigma^1$}

The case that $\Sigma_0 = \partial \sigma^1 * \partial \sigma^1 * \partial \sigma^1$ is the boundary complex of the octahedron is more complicated than the previous two cases and cannot immediately be reduced to the case that $\Sigma_0 = \partial \sigma^3$.  This case requires special handling of base cases for small values of $\eta_A$, $\eta_B$,  $\eta_E$, and $\eta_F$. We can start by labeling the vertices of the octahedron as in Figure \ref{fig:base-sphere-labels}.

\subsubsection{The case that $\eta_F = 0$}

\begin{algorithm} \label{alg:octahedron-F=0}
Let $\Delta$ be a $2$-dimensional PS ear-decomposable simplicial complex with $\Sigma_0 = \partial \sigma^1 *  \partial \sigma^1 *  \partial \sigma^1$ and $\eta_F = 0$. We define a new PS ear-decomposable complex $\mathcal{C}(\Delta)$ and a set of monomials $\mathcal{M}(\Delta)$ in the variables $\{x_4,x_5, x_6, \ldots,x_{\eta_A+\eta_B+6}\}$ inductively as follows. 

\begin{itemize}
\item[Step 0:]
\item Set $\Sigma_0 = \partial \sigma^1 *  \partial \sigma^1 *  \partial \sigma^1$ on vertex set $\{v_1,v_2,v_3,v_4,v_5, v_6\}$ as shown in Figure \ref{fig:base-sphere-labels}.

\item Set $\mathcal{M}(\Delta) = \{1, x_4,x_5,x_6,x_4x_5,x_4x_6,x_5x_6,x_4x_5x_6\}$.
\item[Step 1:]
\item For $7 \leq i \leq \eta_B+6$: 
\begin{itemize}
\item Introduce vertex $v_i$ to $\mathcal{C}(\Delta)$ through a B-move whose boundary is glued along the cycle $v_1v_2v_4v_3$.
\item Add $\{x_i,x_4x_i,x_i^2,x_4x_i^2\}$ to $\mathcal{M}(\Delta)$.
\end{itemize}

\item For $\eta_B+7 \leq i \leq \eta_A+\eta_B+6$:
\begin{itemize}
\item Introduce vertex $v_i$ to $\mathcal{C}(\Delta)$ through an A-move whose boundary is glued along the cycle $v_1v_2v_3$.
\item Add $\{x_i,x_i^2,x_i^3\}$ to $\mathcal{M}(\Delta)$.
\end{itemize}

\item[Step 2:]
\item For $1 \leq \ell \leq \eta_E$: 
\begin{itemize}
\item Let $\{v_i,v_j\}$ be the revlex smallest missing edge in $\mathcal{C}(\Delta)$.  
\item If $\{v_i,v_j\} = \{v_1,v_4\}$:
\begin{itemize}
\item Add edge $\{v_1,v_4\}$ to $\mathcal{C}(\Delta)$ through an E-move whose boundary is $v_1v_2v_4v_5$.
\item Add $\{x_4^2,x_4^3\}$ to $\mathcal{M}(\Delta)$.
\end{itemize}

\item Else if $\{v_i,v_j\} = \{v_2,v_5\}$:
\begin{itemize}
\item Add edge $\{v_2,v_5\}$ to $\mathcal{C}(\Delta)$ through an E-move whose boundary is $v_1v_2v_4v_5$.
\item Add $\{x_5^2,x_5^3\}$ to $\mathcal{M}(\Delta)$.
\end{itemize}

\item Else if $\{v_i,v_j\} = \{v_3,v_6\}$:
\begin{itemize}
\item Add edge $\{v_3,v_3\}$ to $\mathcal{C}(\Delta)$ through an E-move whose boundary is $v_1v_3v_4v_6$.
\item Add $\{x_6^2,x_6^3\}$ to $\mathcal{M}(\Delta)$.
\end{itemize}

\item Else:
\begin{itemize}
\item Add edge $\{v_i,v_j\}$ to $\mathcal{C}(\Delta)$ through an E-move whose boundary is $v_1v_iv_2v_j$.
\item Add $\{x_ix_j, x_i^2x_j\}$ to $\mathcal{M}(\Delta)$.
\end{itemize}

\end{itemize}
\end{itemize}
\end{algorithm}

Step 0 and Step 1 in Algorithm \ref{alg:octahedron-F=0} proceed as they have in all other cases, and it is clear that $\mathcal{M}(\Delta)$ is a pure multicomplex upon the completion of Step 1. In Step 2, the revlex smallest missing edges $\{v_1,v_4\}$, $\{v_2,v_5\}$, and $\{v_3,v_6\}$ correspond to the values $\ell = 1$, $\ell = 2$, and $\ell = 3$ respectively.  When $\ell \geq 4$, we see that $x_i^2 \in \mathcal{M}(\Delta)$ for all $4 \leq i \leq \eta_A + \eta_B + 6$ and moreover that the revlex smallest missing edge $\{v_i,v_j\}$ satisfies $j \geq 7$ and $i \geq 4$.  Therefore the contribution of the monomials $\{x_ix_j, x_i^2x_j\}$ preserves the property that $\mathcal{M}(\Delta)$ is a pure multicomplex.  This proves the following theorem. 

\begin{theorem} \label{thm:octahedron-F=0}
The set of monomials $\mathcal{M}(\Delta)$ output by Algorithm \ref{alg:octahedron-F=0} is a pure multicomplex.  Moreover $F(\mathcal{M}(\Delta)) = h(\mathcal{C}(\Delta)) = h(\Delta)$.  Consequently, if $\Delta$ is a 2-dimensional PS ear-decomposable simplicial complex with $\Sigma_0 = \partial \sigma^1*\partial\sigma^1*\partial\sigma^1$ and $\eta_F = 0$, then $h(\Delta)$ is a pure $\mathcal{O}$-sequence.
\end{theorem}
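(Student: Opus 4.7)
The plan is to establish three claims in sequence: (i) the algorithm terminates and produces a valid PS ear-decomposable simplicial complex $\mathcal{C}(\Delta)$ with $h(\mathcal{C}(\Delta)) = h(\Delta)$; (ii) $F(\mathcal{M}(\Delta)) = h(\mathcal{C}(\Delta))$; and (iii) $\mathcal{M}(\Delta)$ is a pure multicomplex. Claims (i) and (ii) reduce to bookkeeping. Step 0 contributes $(1,3,3,1)$ to both $h(\mathcal{C}(\Delta))$ and $F(\mathcal{M}(\Delta))$; each B-move contributes $(0,1,2,1)$, each A-move contributes $(0,1,1,1)$, and each E-move contributes $(0,0,1,1)$ to both. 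Since $\Delta$ and $\mathcal{C}(\Delta)$ share the same base sphere and identical counts $\eta_A,\eta_B,\eta_E$ of ear-moves, they have the same $f$-vector, hence the same number of missing edges; this guarantees that Step 2 has enough revlex-smallest missing edges to process, and that $h(\mathcal{C}(\Delta)) = h(\Delta)$.

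The main content is claim (iii). Step 0 sets $\mathcal{M}(\Delta)$ equal to the set of all divisors of $x_4 x_5 x_6$, which is a pure multicomplex. In Step 1, each A-move contributes the chain $\{x_i, x_i^2, x_i^3\}$ whose only required outside divisor is $1$, and each B-move contributes $\{x_i, x_4 x_i, x_i^2, x_4 x_i^2\}$ whose required outside divisors are $1$ and $x_4 \in \mathcal{M}(\Delta)$. Purity persists because every new monomial divides a degree-$3$ element added in the same move, namely $x_i^3$ or $x_4 x_i^2$.

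The delicate step is Step 2, and the main obstacle is verifying the following revlex-ordering claim: after Step 1, the three revlex-smallest missing edges are, in order, $\{v_1,v_4\}$, $\{v_2,v_5\}$, and $\{v_3,v_6\}$. This will follow because every vertex $v_i$ with $i \geq 7$ is adjacent to $v_1,v_2,v_3$ (both A- and B-moves ensure this), so any missing edge incident to $v_i$ has its other endpoint in $\{v_4,v_5,v_6,v_7,\ldots\}$ and maximum index at least $7$; among edges of max index at most $6$, the only missing ones are the three antipodal pairs of the octahedron. Consequently, whenever $\eta_E \geq k$ for $k \in \{1,2,3\}$, the $k$-th iteration falls into a special case of the algorithm, and these special cases deposit $x_4^2, x_5^2, x_6^2$ into $\mathcal{M}(\Delta)$ before any generic iteration executes. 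For a generic iteration ($\ell \geq 4$), the missing edge $\{v_i,v_j\}$ satisfies $j \geq 7$ (so $x_j^2 \in \mathcal{M}(\Delta)$ from Step 1) and $i \geq 4$ (so $x_i^2 \in \mathcal{M}(\Delta)$ from either Step 1 or one of the first three iterations). Thus all proper divisors of the newly added monomials $x_i x_j$ and $x_i^2 x_j$ lie in $\mathcal{M}(\Delta)$, and purity is preserved since $x_i^2 x_j$ has degree $3$. In the small cases $\eta_E \leq 3$, one or more of $x_5^2, x_6^2$ may never be added, but they are also never required by any subsequent iteration, so the multicomplex property is maintained throughout.
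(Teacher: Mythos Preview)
Your proof is correct and follows essentially the same approach as the paper: the paper's argument is the short paragraph preceding the theorem, which verifies that the first three iterations of Step~2 handle the antipodal pairs $\{v_1,v_4\},\{v_2,v_5\},\{v_3,v_6\}$ and that for $\ell\geq 4$ the revlex-smallest missing edge $\{v_i,v_j\}$ has $j\geq 7$ and $i\geq 4$, so the required squares $x_i^2$ are already present. You supply more detail (the termination bookkeeping, the explicit check that divisors are present, and the small-$\eta_E$ remark), but the logical structure is identical.
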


\subsubsection{The case that $\eta_E = 0$}

\begin{lemma} \label{lemma:octahedron-E=0}
Let $\Delta$ be a $2$-dimensional PS ear-decomposable simplicial complex with  $\Sigma_0 = \partial \sigma^1 * \partial \sigma^1 * \partial \sigma^1$ and $\eta_E = 0$.  If $\eta_B = 0$, then $\eta_F = 0$.  Otherwise, if $\eta_B > 0$, then 
\begin{displaymath}
\eta_F \leq 
\begin{cases}
2\eta_B-1 & \text{if } \eta_A = 0, \\
2\eta_B & \text{if } \eta_A > 0.
\end{cases}
\end{displaymath}
\end{lemma}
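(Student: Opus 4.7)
The plan is a double-count of the triangles ($3$-cycles) of the $1$-skeleton $G$ of $\Delta$ against the triangular faces of $\Delta$. On one hand, counting $2$-faces by ear type gives
\[ f_2(\Delta) = 8 + 3\eta_A + 4\eta_B + 2\eta_E + \eta_F, \]
since A-, B-, E-, and F-ears contribute $3, 4, 2, 1$ triangles respectively and $\Sigma_0$ contributes the $8$ faces of the octahedron.

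On the other hand, I would count $\#T(G)$ by attributing each triangle to its most recently created vertex. The $8$ triangles of $\Sigma_0$ account for those entirely inside $\Sigma_0$. For any later vertex $v_i$, the triangles through $v_i$ with two earlier vertices are in bijection with the edges of $G$ among the boundary vertices of $v_i$'s ear. Here the hypothesis $\eta_E = 0$ is essential: because A- and B-ears only introduce edges incident to the newly added vertex and no E-ear occurs, the induced subgraph on any set of pre-existing vertices never gains edges afterwards, so the relevant edge count is already determined when $v_i$ is born. An A-vertex contributes exactly $3$ (its boundary is a triangle), while a B-vertex contributes $4+k_i$, where $k_i \in \{0,1,2\}$ is the number of diagonals of its boundary $4$-cycle that happen to be edges in the existing complex. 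Writing $d = \sum_{\text{B-ears}} k_i$ yields $\#T(G) = 8 + 3\eta_A + 4\eta_B + d$. Since every $2$-face of $\Delta$ is a triangle in $G$, we have $f_2(\Delta) \le \#T(G)$, which simplifies to $\eta_F \le d \le 2\eta_B$. This already handles the case $\eta_B = 0$ (forcing $\eta_F = 0$) and the case $\eta_A > 0$ of the lemma.

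To sharpen the bound to $\eta_F \le 2\eta_B - 1$ when $\eta_A = 0$ and $\eta_B > 0$, I would inspect the first B-ear in the decomposition. Because $\eta_A = \eta_E = 0$, only F-ears can precede it, and F-ears do not alter the $1$-skeleton, so the boundary $4$-cycle of this first B-ear sits inside the $1$-skeleton of $\Sigma_0 = \partial\sigma^1 * \partial\sigma^1 * \partial\sigma^1$, which is the complete tripartite graph $K_{2,2,2}$. If both diagonals of this $4$-cycle were edges, the four boundary vertices would induce a $K_4$; but $K_{2,2,2}$ has independence number $2$, so any four of its vertices contain an antipodal (non-adjacent) pair, precluding $K_4$. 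Hence $k_1 \le 1$, giving $d \le 1 + 2(\eta_B-1) = 2\eta_B-1$. The main subtle point throughout is ensuring that $d$ is well-defined and stable, i.e.\ that later ears cannot retroactively turn a non-diagonal into an edge; this is precisely what the hypothesis $\eta_E = 0$ secures, and it is the reason the argument would fail if E-ears were allowed.
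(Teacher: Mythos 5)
Your proof is correct and rests on the same combinatorial facts as the paper's: only B-moves can create triangles in the $1$-skeleton that are not yet $2$-faces (at most two per B-move, one per present diagonal of the boundary $4$-cycle), and the octahedron graph contains no $K_4$, so when $\eta_A = 0$ the first B-move contributes at most one; you package this as a double count of $\#T(G)$ against $f_2(\Delta)$, while the paper tracks the running number of missing triangles directly, but the content is identical. One small slip in the justification: independence number $2$ does not by itself preclude a $K_4$ (e.g.\ $K_6$ has independence number $1$); the correct reason is that the six vertices of $K_{2,2,2}$ split into three antipodal non-adjacent pairs, so by pigeonhole any four vertices include both members of some pair and hence a non-edge, ruling out $K_4$.
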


\begin{proof}
Attaching a PS ball by an A-move does not change the number of missing triangles in a PS ear-decomposable simplicial complex.  Therefore, since $\Sigma_0$ does not contain any missing triangles, neither will $\Delta$ if $\eta_B = 0$.  

Now suppose $\eta_B > 0$.  When a new vertex is introduced through a type-B move, there is a potential for two missing triangles to be introduced to $\Delta$.  Specifically, if vertex $v$ is introduced through a B-move whose boundary vertices are labeled as shown below, then the new missing triangles that are created are $\{v,x,z\}$ (as long as $\{x,z\}$ was already an edge) and $\{v,w,y\}$ (as long as $\{w,y\}$ was already an edge).

\begin{center}
\begin{tikzpicture}
\draw[red, thick , fill=gray!25]  (0,0) -- (2,0) -- (2,2) -- (0,2) -- (0,0);
\foreach \p in {(0,0), (2,0), (2,2), (0,2)}{
	\draw (1,1) -- \p;
	\draw[red, fill=red] \p circle (.1); 
}
\draw (1,1) node[anchor=west] {$v$};
\draw (0,0) node[anchor=north east] {$w$};
\draw (2,0) node[anchor=north west] {$x$};
\draw (2,2) node[anchor=south west] {$y$};
\draw (0,2) node[anchor=south east] {$z$};
\draw[fill=black] (1,1) circle (.1);

\end{tikzpicture}
\end{center}

Therefore, $\eta_F \leq 2\eta_B$ whenever $\eta_B > 0$.  However, if $\eta_A = 0$, then the first ear attached through a B-move cannot contribute two missing triangles because the graph of the octahedron does not contain an induced $K_4$ subgraph (meaning either $\{x,z\}$ or $\{w,y\}$ will be missing).  This means that when $\eta_A = 0$ and $\eta_B>0$, it must be the case that $\eta_F \leq 2\eta_B-1$. 
\end{proof}

\begin{algorithm} \label{alg:octahedron-E=0}
Let $\Delta$ be a $2$-dimensional PS ear-decomposable simplicial complex with $\Sigma_0 = \partial \sigma^1 *  \partial \sigma^1 *  \partial \sigma^1$ and $\eta_E = 0$. We define a new PS ear-decomposable complex $\mathcal{C}(\Delta)$ and a set of monomials $\mathcal{M}(\Delta)$ in the variables $\{x_4,x_5, x_6, \ldots,x_{\eta_A+\eta_B+6}\}$ inductively as follows. 

\begin{itemize}
\item[Step 0:]
\item Set $\Sigma_0 = \partial \sigma^1 *  \partial \sigma^1 *  \partial \sigma^1$ on vertex set $\{v_1,v_2,v_3,v_4,v_5, v_6\}$ as shown above.

\item Set $\mathcal{M}(\Delta) = \{1, x_4,x_5,x_6,x_4x_5,x_4x_6,x_5x_6,x_4x_5x_6\}$.
\item[Step 1:]
\item If $\eta_A = 0$ and $\eta_B > 0$:
\begin{itemize}
\item Introduce vertex $v_7$ to $\mathcal{C}(\Delta)$ through a $B$-move whose boundary is glued along the cycle $v_1v_2v_4v_3$.
\item Add $\{x_7,x_4x_7,x_7^2,x_4x_7^2\}$ to $\mathcal{M}(\Delta)$
\item For $8 \leq i \leq 7+(\eta_B-1)$: 
\begin{itemize}
\item Introduce vertex $v_i$ to $\mathcal{C}(\Delta)$ through a B-move whose boundary is glued along the cycle $v_1v_2v_3v_7$.
\item Add $\{x_i,x_i^2,x_7x_i,x_7x_i^2\}$ to $\mathcal{M}(\Delta)$.
\end{itemize}
\end{itemize}

\item If $\eta_A > 0$:
\begin{itemize}
\item For $7 \leq i \leq \eta_A+6$:
\begin{itemize}
\item Introduce vertex $v_i$ to $\mathcal{C}(\Delta)$ through an A-move whose boundary is glued along the cycle $v_1v_2v_3$.
\item Add $\{x_i,x_i,x_i^3\}$ to $\mathcal{M}(\Delta)$.
\end{itemize}
\item For $\eta_A + 7 \leq i \leq \eta_A+\eta_B+6$:
\begin{itemize}
\item Introduce vertex $v_i$ to $\mathcal{C}(\Delta)$ through a B-move whose boundary is glued along the cycle $v_1v_2v_3v_7$.
\item Add $\{x_i,x_i^2,x_7x_i,x_7x_i^2\}$ to $\mathcal{M}(\Delta)$.
\end{itemize}
\end{itemize}

\item[Step 2:]
\item If $\eta_A = 0$: 
\begin{itemize}
\item Let $\mathcal{S} = \{x_7^3\} \cup \{x_7^2x_i, x_i^3 \ : \ 8 \leq i \leq \eta_B+6\}$.
\item Let $\mathcal{F} = \{\{v_1,v_4,v_7\}\} \cup \{\{v_1,v_3,v_i\},\{v_2,v_7,v_i\} \ : \ 8 \leq i \leq \eta_B+6\}$. 
\item Add the first $\eta_F$ monomials in $\mathcal{S}$ (under revlex order) to $\mathcal{M}(\Delta)$.
\item Add the first $\eta_F$ faces in $\mathcal{F}$ (under revlex order) to $\mathcal{C}(\Delta)$.
\end{itemize}

\item If $\eta_A>0$:
\begin{itemize}
\item Let $\mathcal{S} = \{x_7^2x_i, x_i^3 \ : \ \eta_A+7 \leq i \leq \eta_A+\eta_B+6\}$.
\item Let $\mathcal{F} =  \{\{v_1,v_3,v_i\},\{v_2,v_7,v_i\} \ : \ \eta_A+7 \leq i \leq \eta_A+\eta_B+6\}$. 
\item Add the first $\eta_F$ monomials in $\mathcal{S}$ (under revlex order) to $\mathcal{M}(\Delta)$.
\item Add the first $\eta_F$ faces in $\mathcal{F}$ (under revlex order) to $\mathcal{C}(\Delta)$.
\end{itemize}

\end{itemize}
\end{algorithm}

This algorithm requires more case analysis because of the bound on $\eta_F$ when $\eta_A = 0$ and $\eta_B > 0$ in Lemma \ref{lemma:octahedron-E=0}. In Step 1, we introduce all vertices through A- and B-moves.  In the case that $\eta_A > 0$, we break with our previous strategy and introduce all type-A vertices first out of convenience.  As noted in the proof of Lemma \ref{lemma:octahedron-E=0}, we do this so that there will be a $K_4$ subgraph on the vertices $\{v_1,v_2,v_3,v_7\}$ where the boundaries of the type-B balls can be glued.  

In Step 2, when $\eta_A = 0$, there are $2\eta_B-1$ faces in set $\mathcal{F}$, which correspond to the maximum number of missing triangles in a PS ear-decomposable complex with $\eta_E = 0$ and $\eta_A = 0$ by Lemma \ref{lemma:octahedron-E=0}.  There are $2\eta_B-1$ corresponding monomials in the set $\mathcal{S}$ that can be added to $\mathcal{M}(\Delta)$.  Similarly, when $\eta_A > 0$, there are $2\eta_B$ faces in set $\mathcal{F}$ and $2\eta_B$ monomials in set $\mathcal{S}$.  Therefore, Algorithm \ref{alg:octahedron-E=0} terminates and produces a pure $\mathcal{O}$-sequence.  This proves the following theorem. 

\begin{theorem} \label{thm:octahedron-E=0}
The set of monomials $\mathcal{M}(\Delta)$ output by Algorithm \ref{alg:octahedron-E=0} is a pure multicomplex.  Moreover $F(\mathcal{M}(\Delta)) = h(\mathcal{C}(\Delta)) = h(\Delta)$.  Consequently, if $\Delta$ is a 2-dimensional PS ear-decomposable simplicial complex with $\Sigma_0 = \partial \sigma^1*\partial\sigma^1*\partial\sigma^1$ and $\eta_E = 0$, then $h(\Delta)$ is a pure $\mathcal{O}$-sequence.
\end{theorem}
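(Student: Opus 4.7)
The plan is to verify four assertions about Algorithm \ref{alg:octahedron-E=0}: (i) each move in the construction of $\mathcal{C}(\Delta)$ is a well-defined A-, B-, or F-move glued onto an existing boundary in $\mathcal{C}(\Delta)$; (ii) after each monomial is added, $\mathcal{M}(\Delta)$ remains a multicomplex; (iii) the final $\mathcal{M}(\Delta)$ is pure; and (iv) the $F$- and $h$-vectors agree degree-by-degree. Since each A-move contributes $(0,1,1,1)$ to both $F(\mathcal{M}(\Delta))$ and $h(\mathcal{C}(\Delta))$, each B-move contributes $(0,1,2,1)$, and each F-move contributes $(0,0,0,1)$, (iv) will follow once (i)--(iii) are checked.

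For Step 1 I would argue by induction on $i$. In Case $\eta_A > 0$ the A-moves glue onto the triangle $v_1 v_2 v_3$ of $\Sigma_0$; once $v_7$ exists and is adjacent to $v_1, v_2, v_3$, the set $\{v_1,v_2,v_3,v_7\}$ spans a $K_4$, so the $4$-cycle $v_1 v_2 v_3 v_7$ is always available for the subsequent B-moves. In Case $\eta_A = 0$ the first B-move is glued along the $4$-cycle $v_1 v_2 v_4 v_3$ of $\Sigma_0$, which is valid since consecutive pairs are adjacent in the octahedron; thereafter $v_7$ is adjacent to $v_1,v_2,v_3$, so the cycle $v_1 v_2 v_3 v_7$ is available. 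The multicomplex property for each added packet $\{x_i,x_4 x_i,x_i^2,x_4 x_i^2\}$, $\{x_i,x_7 x_i,x_i^2,x_7 x_i^2\}$, or $\{x_i,x_i^2,x_i^3\}$ is a direct check: each new monomial has all proper divisors either in the initial seed or added earlier in the same packet.

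Step 2 is the main obstacle. The challenge is showing that the number of F-moves the algorithm can perform matches $\eta_F$, both geometrically (enough missing triangles exist in $\mathcal{C}(\Delta)$) and algebraically (enough degree-$3$ monomials exist whose proper divisors already lie in $\mathcal{M}(\Delta)$). The geometric side comes from the same analysis as the proof of Lemma \ref{lemma:octahedron-E=0}: each B-move with boundary 4-cycle $w\,x\,y\,z$ creates a potential missing triangle for each diagonal $\{w,y\}$, $\{x,z\}$ that was already an edge of $\mathcal{C}(\Delta)$, and I would verify that these potential missing triangles are precisely enumerated by $\mathcal{F}$ (with the first B-move in the $\eta_A=0$ case contributing only one such triangle because of the missing edge between antipodal vertices of the octahedron, which explains the $2\eta_B-1$ cardinality). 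The algebraic side requires checking that every monomial in $\mathcal{S}$ has all its degree-$\leq 2$ divisors already present in $\mathcal{M}(\Delta)$ after Step 1; this reduces to inspecting $x_7^3$, $x_7^2 x_i$, and $x_i^3$ in turn. Since $|\mathcal{F}| = |\mathcal{S}|$ equals the bound of Lemma \ref{lemma:octahedron-E=0}, and since $\eta_F$ does not exceed this bound, Step 2 terminates, and the revlex choices on $\mathcal{F}$ and $\mathcal{S}$ can be made independently (no compatibility between them is needed for the conclusion).

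Finally I would verify purity. Since Step 2 adds only degree-$3$ monomials, it suffices to establish purity after Step 1: every degree-$1$ monomial $x_i$ divides a degree-$2$ monomial already in $\mathcal{M}(\Delta)$ (for instance $x_4 x_5$, $x_4 x_7$, $x_4 x_i$, $x_7 x_i$, or $x_i^2$), and every degree-$2$ monomial divides a degree-$3$ monomial already in $\mathcal{M}(\Delta)$ (for instance $x_4 x_5 x_6$, $x_4 x_7^2$, or $x_7 x_i^2$). Because the monomials added in Step 2 have degree $3$ and purity is preserved under enlarging the set of top-degree monomials, the final $\mathcal{M}(\Delta)$ is pure, and combining with (iv) completes the proof.
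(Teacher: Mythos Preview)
Your proposal is correct and follows essentially the same approach as the paper: verify that the gluings in Step~1 are valid (in particular that the $4$-cycle $v_1v_2v_3v_7$ is available once $v_7$ exists), count that $|\mathcal{S}|=|\mathcal{F}|$ equals the bound of Lemma~\ref{lemma:octahedron-E=0} so Step~2 terminates, and check purity directly. Your write-up is in fact more thorough than the paper's, which only sketches the counts; the explicit purity check and the divisor verification for the monomials in $\mathcal{S}$ are details the paper leaves implicit.
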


\subsubsection{The case that $\eta_E > 0$ and $\eta_F > 0$}

Our goal in the case that $\eta_E > 0$ and $\eta_F>0$ is to reduce to the cases already established in which $\Sigma_0$ is the boundary of a tetrahedron or a bipyramid. 

\begin{proposition} \label{prop:base-oct-withv1v4}
Let $\Delta = \Sigma_0 \cup \Sigma_1 \cup \cdots \cup \Sigma_t$ be a $2$-dimensional PS ear-decomposable simplicial complex with $\Sigma_0 = \partial \sigma^1 *  \partial \sigma^1 *  \partial \sigma^1$.  Assume the vertices of $\Sigma_0$ are labeled as in Figure \ref{fig:base-sphere-labels}.  If $\{v_1,v_4\} \in \Delta$, then there exists a PS ear-decomposable simplicial complex $\Delta' = \Sigma_0' \cup \Sigma_1' \cup \cdots \cup \Sigma_t'$ such that $\Sigma_0' = \partial\sigma^2 * \partial \sigma^1$ and $h(\Delta) = h(\Delta')$.  Consequently, $h(\Delta)$ is a pure $\mathcal{O}$-sequence.
\end{proposition}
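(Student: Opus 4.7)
Since $v_1$ and $v_4$ are antipodal in the octahedron $\Sigma_0$, the edge $\{v_1,v_4\}$ is absent from $\Sigma_0$ and must be introduced by a later ear. An A-move or B-move only adds edges incident to its new vertex, and an F-move requires all three edges of the filled triangle to be present already, so $\{v_1,v_4\}$ must be created by an E-move $\Sigma_k$. Its boundary is a $4$-cycle $v_1 v_i v_4 v_j$ where $v_i,v_j$ are common neighbors of $v_1$ and $v_4$ in $\Sigma_0 \cup \Sigma_1 \cup \cdots \cup \Sigma_{k-1}$, and $\Sigma_k$ contributes the two triangles $\{v_1,v_4,v_i\}$ and $\{v_1,v_4,v_j\}$.

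The plan is to replace $\Sigma_0 \cup \Sigma_k$ with a bipyramid glued to a B-move. Both unions have the same $f$-vector: six vertices, thirteen edges, ten triangles. In the cleanest case, when $\{v_i,v_j\}$ is one of the octahedron's antipodal pairs, say $\{v_2,v_5\}$, this agreement upgrades to equality as simplicial complexes. Let $\Sigma_0' = \partial \sigma^1 * \partial \sigma^2$ be the bipyramid on $\{v_1,v_2,v_3,v_4,v_5\}$ with equator $\{v_1,v_3,v_4\}$ and poles $\{v_2,v_5\}$, and let $\Sigma_1'$ be a B-move introducing $v_6$ along the $4$-cycle $v_1 v_2 v_4 v_5$ in $\Sigma_0'$. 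A direct enumeration of the ten triangles in each complex confirms $\Sigma_0 \cup \Sigma_k = \Sigma_0' \cup \Sigma_1'$.

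With this identification, I reorder the PS ear-decomposition of $\Delta$ as $\Sigma_0' \cup \Sigma_1' \cup \Sigma_1 \cup \cdots \cup \Sigma_{k-1} \cup \Sigma_{k+1} \cup \cdots \cup \Sigma_t$. Each retained ear $\Sigma_j$ was originally attached along a boundary contained in $\Sigma_0 \cup \Sigma_1 \cup \cdots \cup \Sigma_{j-1}$, and under the new ordering the analogous subcomplex contains all of the original faces (and, for $j<k$, additional faces coming from $\Sigma_1'$). So the reordered decomposition is a valid PS ear-decomposition; we take $\Delta'=\Delta$, and Theorem~\ref{thm:basespherebipyramid} finishes this case.

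The main obstacle is the remaining case, where $\{v_i,v_j\}$ is not an antipodal pair in the octahedron, or includes a vertex created by an earlier A- or B-move. The octahedron symmetries act separately on the orbits of antipodal and non-antipodal pairs, so one cannot reduce to the previous case by relabeling alone, and the equality $\Sigma_0 \cup \Sigma_k = \Sigma_0' \cup \Sigma_1'$ fails by exactly one pair of triangles. Here one must construct $\Delta'$ as a genuinely different simplicial complex with the same $h$-vector, by swapping one E-triangle for its antipodal counterpart, then transferring the remaining ears and rerouting any subsequent A-move whose boundary triangle was swapped --- these are the only ears at risk, since B-move and E-move boundaries are $4$-cycles rather than triangles and F-moves fill already-missing triangles. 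Each swap preserves the $f$-vector, hence $h(\Delta')=h(\Delta)$, and the construction of the preceding paragraph applies to $\Delta'$.
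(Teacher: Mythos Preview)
Your overall structure matches the paper's: first handle the clean case where the E-move $\Sigma_k$ has boundary cycle $v_1v_2v_4v_5$, in which $\Sigma_0 \cup \Sigma_k$ coincides with a bipyramid glued to a B-move and the decomposition can simply be reordered; then reduce the general case to this one by modifying which triangles the E-move contributes. Your treatment of the clean case is correct and agrees with the paper.

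The gap is in your reduction step. You claim that after swapping the E-triangle $\{v_1,v_4,w\}$ for $\{v_1,v_4,v_5\}$ the only ears at risk are subsequent A-moves, and you dismiss F-moves because they ``fill already-missing triangles.'' Both assertions are backwards. An A-move's boundary is the $1$-complex $\partial\sigma^2$, a $3$-cycle of edges; since your swap changes only $2$-faces and leaves the edge set untouched, every later A-move still satisfies $\Sigma_j \cap (\text{preceding complex}) = \partial\Sigma_j$ with no rerouting needed. Conversely, an F-move \emph{is} at risk: if some later $\Sigma_j$ is an F-move filling $\{v_1,v_4,v_5\}$, then after your swap that face is already present and $\Sigma_j \cap (\text{preceding complex}) = \Sigma_j \neq \partial\Sigma_j$, so the decomposition breaks. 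The paper handles exactly this by splitting on whether $\tau = \{v_1,v_4,v_5\}$ lies in $\Delta$: if it does, the offending F-move is redirected to fill the now-empty triangle $\{v_1,v_4,w\}$ (whose edges are all still present); if it does not, the swap alone suffices and one takes $\Delta' \neq \Delta$. Your argument omits this case distinction, and the fix you propose (rerouting A-moves) addresses a nonexistent obstruction while leaving the real one open.
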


\begin{proof}
Let $\Sigma_i$ be the ear in which edge $\{v_1,v_4\}$ is introduced.  The ear $\Sigma_i$ must be of type-E, so there are vertices $u$ and $w$ such that $\Sigma_i$ has triangular faces $\{v_1,v_4,u\}$ and $\{v_1,v_4,w\}$. 

The proof proceeds in two steps. First we handle the case that $\{u,w\} = \{v_2,v_5\}$.  Next we reduce to that case. 

If $\{u,w\} = \{v_2,v_5\}$, let $\Sigma_0'$ be the bipyramid and let $\Sigma_1'$ be the PS ear shown in Figure \ref{new-ears}. 

\begin{center}
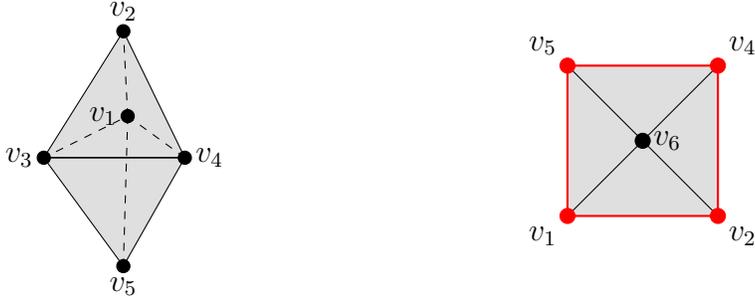
\begin{figure}[h]
\begin{tabular}{p{.4\textwidth}p{.4\textwidth}}
\begin{center}
\begin{tikzpicture}[scale=1.25]
\def \ts {(.25,1.25,-.25)};
\def \bs{(.25,-1.25,-.25)};
\def \a{(1,0,0)};
\def \b{(-.5,0,0)};
\def \c{(-.05,0,-1.15)};
\draw[fill=gray!25] \ts node[anchor=south] {$v_2$}-- \a node[anchor=west] {$v_4$} -- \b node[anchor=east] {$v_3$} -- \ts;
\draw[fill=gray!25] \bs node[anchor=north]{$v_5$} -- \a -- \b -- \bs;
\draw[fill=black] \a circle (.07);
\draw[fill=black] \b circle (.07);
\draw[fill=black] \c circle (.07) node[anchor=east] {$v_1$};
\draw[fill=black]  \ts circle (.07);
\draw[fill=black] \bs circle (.07);
\draw[dashed] \c -- \ts;
\draw[dashed] \c -- \bs;
\draw \a -- \b;
\draw[dashed] \a -- \c  -- \b;
\end{tikzpicture}
\end{center}
&
\begin{center}
\begin{tikzpicture}
\draw[red, thick , fill=gray!25]  (0,0) -- (2,0) -- (2,2) -- (0,2) -- (0,0);
\foreach \p in {(0,0), (2,0), (2,2), (0,2)}{
	\draw (1,1) -- \p;
	\draw[red, fill=red] \p circle (.1); 
}
\draw (1,1) node[anchor=west] {$v_6$};
\draw (0,0) node[anchor=north east] {$v_1$};
\draw (2,0) node[anchor=north west] {$v_2$};
\draw (2,2) node[anchor=south west] {$v_4$};
\draw (0,2) node[anchor=south east] {$v_5$};
\draw[fill=black] (1,1) circle (.1);

\end{tikzpicture}
\end{center}
\end{tabular}
\caption{The new PS sphere $\Sigma_0'$ (left) and PS ball $\Sigma_1'$ (right).}
 \label{new-ears}
\end{figure}
\end{center}

Note that $\Sigma_0' \cup \Sigma_1' = \Sigma_0 \cup \Sigma_i$.  Therefore, we can decompose $\Delta$ as $\Delta = \Sigma_0' \cup \Sigma_1' \cup \Sigma_2 \cup \cdots \cup \Sigma_{i-1} \cup \Sigma_{i+1} \cup \cdots \cup \Sigma_t$, which satisfies the claim.  

Now suppose instead that $\{u,w\} \neq \{v_2,v_5\}$. First, assume $\{u,w\}$ and $\{v_2,v_5\}$ share one element.  Without loss of generality, $u = v_2$ but $w \neq v_5$.  Consider the set $\tau = \{v_1,v_4,v_5\}$.  The three edges $\{v_1,v_4\}$, $\{v_1,v_5\}$, and $\{v_4,v_5\}$ all belong to $\Delta$.  

If $\tau \in \Delta$, then $\tau$ is added to $\Delta$ through an F-move because $\{v_1,v_5\}$ and $\{v_4,v_5\}$ are edges in $\Sigma_0$ and $\{v_1,v_4\}$ is introduced through $\Sigma_i$, and neither $\Sigma_0$ nor $\Sigma_i$ contains $\tau$. Therefore, there is some $j > i$ such that $\Sigma_j$ is an F-move introducing face $\tau$.  We can replace $\Sigma_i$ with the PS ear $\Sigma_i'$, which is an E-move introducing edge $\{v_1,v_4\}$ using the triangles $\{v_1,v_2,v_4\}$ and $\{v_1,v_4,v_5\}$, and replace $\Sigma_j$ with the PS ear $\Sigma_j'$, which is an F-move introducing the face $\{v_1,v_4,w\}$. This is an alternate PS ear decomposition of $\Delta$ that satisfies the claim.

On the other hand, if $\tau \notin \Delta$,  let $\Delta'$ be the complex obtained from $\Delta$ by removing the face $\{v_1,v_4,w\}$ and adding the face $\tau$.  This complex is PS ear-decomposable because we can replace $\Sigma_i$ with the ear $\Sigma_i'$, which is an E-move with triangles $\{v_1,v_2,v_4\}$ and $\{v_1,v_4,v_5\}$.  Clearly $h(\Delta') = h(\Delta)$, so $\Delta'$ satisfies the claim. 

Finally, if $\{u,w\} \cap \{v_2,v_5\} = \emptyset$, we apply the previous argument twice. 
\end{proof}

Now we turn our attention to the case that $\{v_1,v_4\} \notin \Delta$.  We begin with a lemma whose proof is essentially Algorithm \ref{tetrabasealg}.

\begin{lemma} \label{compress-constructible}
Let $G$ be a constructible graph and $\mathcal{C}(G)$ its compression. There exists a $2$-dimensional PS ear-decomposable simplicial complex $\Delta$ whose underlying graph is $\mathcal{C}(G)$.
\end{lemma}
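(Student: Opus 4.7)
The plan is to construct $\Delta$ by mimicking Steps 0--2 of Algorithm \ref{tetrabasealg} with $\eta_A$, $\eta_B$, $\eta_E$ set equal to the number of A-, B-, and E-moves used to build $\mathcal{C}(G)$ (and $\eta_F = 0$), maintaining as an invariant that the underlying graph of the partial $\Delta$ always agrees with the partially constructed $\mathcal{C}(G)$.

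First I would set $\Sigma_0 = \partial \sigma^3$ on $\{v_1,v_2,v_3,v_4\}$, which realizes the initial $K_4$ in the construction of $\mathcal{C}(G)$. Next, for each B-move in $\mathcal{C}(G)$ introducing a vertex $v_\ell$ adjacent to $\{v_1,v_2,v_3,v_4\}$, I would attach a type-B PS ball (i.e., $\sigma^0 * \partial\sigma^1 * \partial\sigma^1$) with apex $v_\ell$ along the 4-cycle $v_1 v_2 v_3 v_4$; then for each A-move introducing a vertex $v_\ell$ adjacent to $\{v_1,v_2,v_3\}$, I would attach a type-A PS ball (i.e., $\sigma^0 * \partial\sigma^2$) with apex $v_\ell$ along the triangle $v_1v_2v_3$. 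Both gluings are valid because the relevant boundary cycle is present in the current $\Delta$, and each adds exactly the right new edges.

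The only step requiring genuine verification is the E-moves. For each of the $\eta_E$ revlex-smallest missing edges $\{v_i,v_j\}$, processed in order, I would attach a type-E PS ball ($\sigma^1 * \partial\sigma^1$) along the 4-cycle $v_1 v_i v_2 v_j$, thereby adding the edge $\{v_i,v_j\}$. The key observation making this work is that after Step 1, every vertex $v_k$ with $k \neq 1,2$ is adjacent to both $v_1$ and $v_2$ in $\Delta$: the vertices $v_3,v_4$ via the initial tetrahedron, and each later vertex via the A- or B-move that introduced it. Since every missing edge is of the form $\{v_i,v_j\}$ with $i,j \geq 4$, the required boundary 4-cycle $v_1 v_i v_2 v_j$ always exists; moreover, neither of the two triangles $\{v_1,v_i,v_j\}$ and $\{v_2,v_i,v_j\}$ that the PS ball contributes can already be present in $\Delta$, because $\{v_i,v_j\}$ itself is missing. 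Hence the E-move is a legitimate PS ear attachment along its boundary.

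I do not expect substantial obstacles beyond this verification. By induction on the number of moves performed, the underlying graph of the partial $\Delta$ matches that of the partial $\mathcal{C}(G)$ at every stage, so the revlex-smallest missing edge selected in the $\ell$-th E-move of our construction agrees with the $\ell$-th E-move prescribed in building $\mathcal{C}(G)$. The resulting $\Delta$ is 2-dimensional and PS ear-decomposable by construction, and its underlying graph is precisely $\mathcal{C}(G)$.
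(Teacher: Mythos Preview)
Your proposal is correct and follows essentially the same approach as the paper's proof, which explicitly says the argument ``is essentially Algorithm \ref{tetrabasealg}'' and carries out exactly the construction you describe. Your write-up is in fact more careful than the paper's, spelling out why the boundary 4-cycle $v_1v_iv_2v_j$ exists (because every $v_k$ with $k\neq 1,2$ is adjacent to both $v_1$ and $v_2$) and why the two new triangles are not already present; the paper compresses this to the single observation that $v_1$ and $v_2$ are common neighbors of $v_i$ and $v_j$.
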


\begin{proof}
We begin with the boundary of a tetrahedron on vertices $\{v_1,v_2,v_3,v_4\}$.  For each vertex $v_i$ of type-B in $\mathcal{C}(G)$, we attach an ear of type-B whose boundary is the cycle $v_1v_2v_3v_4$, and for each vertex of type-A, we attach an ear of type-A whose boundary is the cycle $v_1v_2v_3$.  If $\{v_i,v_j\}$ is an edge added to $\mathcal{C}(G)$, then $v_1$ and $v_2$ are common neighbors of $v_i$ and $v_j$ by construction, so we attach an ear of type-E whose boundary is the cycle $v_1v_iv_2v_j$.  The underlying graph of the resulting complex is $\mathcal{C}(G)$.
\end{proof}

For the remainder of the paper, we assume $\Delta$ is a $2$-dimensional PS ear-decomposable simplicial complex with $\Sigma_0 = \partial \sigma^1 *  \partial \sigma^1 *  \partial \sigma^1$ such that $\{v_1,v_4\} \notin \Delta$.  Let $G$ be the underlying graph of $\Delta$. Let $G'$ be the graph obtained from $G$ by applying the shifting operator $\mathcal{S}_{4,5}$. We know $G$ is constructible and claim that $G'$ is also constructible. First,  $\mathcal{S}_{4,5}$ acts on the graph of $\Sigma_0$ by removing edge $\{v_1,v_5\}$ and adding edge $\{v_1,v_4\}$. 

\begin{center}
\begin{tikzpicture}[scale=.9]
\def \a{(-2,0)};
\def \b{(-3,1)};
\def \c{(2,0)};
\def \d{(3,1)};
\def \e{(-.707,3)};
\def \f{(.707,3)};

\filldraw[black] \a circle (.07) node[anchor=east] {$v_1$};
\filldraw[black] \c circle (.07) node[anchor=west] {$v_2$};
\filldraw[black] \e circle (.07) node[anchor=south] {$v_3$};
\filldraw[black] \b circle (.07) node[anchor=east] {$v_4$};
\filldraw[black] \d circle (.07) node[anchor=west] {$v_5$};
\filldraw[black] \f circle (.07) node[anchor=south] {$v_6$};

\draw \a -- \c -- \b -- \d -- \a -- \e -- \b -- \f -- \a;
\draw \c -- \e -- \d -- \f -- \c;

\draw[->] (4,1.5) -- (6,1.5);
\draw (5,2) node {$\mathcal{S}_{4,5}$};

\def \a{(8,0)};
\def \b{(7,1)};
\def \c{(12,0)};
\def \d{(13,1)};
\def \e{(10-.707,3)};
\def \f{(10.707,3)};

\filldraw[black] \a circle (.07) node[anchor=east] {$v_1$};
\filldraw[black] \c circle (.07) node[anchor=west] {$v_2$};
\filldraw[black] \e circle (.07) node[anchor=south] {$v_3$};
\filldraw[black] \b circle (.07) node[anchor=east] {$v_4$};
\filldraw[black] \d circle (.07) node[anchor=west] {$v_5$};
\filldraw[black] \f circle (.07) node[anchor=south] {$v_6$};

\draw  \a -- \e -- \b -- \f -- \a;
\draw \c -- \e -- \d -- \f -- \c;
\draw \b -- \c -- \a -- \b -- \d;
\end{tikzpicture}
\end{center}

The resulting graph is constructible, starting with the complete graph on $\{v_1,v_2,v_3,v_4\}$, then adding edges $\{v_1,v_6\}$, $\{v_3,v_6\}$, and $\{v_4,v_6\}$ through an A-move that introduces vertex $v_6$, and adding edges $\{v_2,v_5\}$, $\{v_4,v_5\}$, and $\{v_5,v_6\}$ through an A-move that introduces vertex $v_5$. We now proceed inductively.  For each vertex $v_{\ell}$ with $\ell > 6$, consider the $\ell$-labeled edges incident to $v_{\ell}$ in $G$.  Let $N$ denote the set of vertices incident to $v_\ell$ by such edges.  If $v_5 \notin N$ or if $\{v_4,v_5\} \subset N$, add $\ell$-labeled edges $\{v_i,v_\ell\}$ to $G'$ for all $i \in G$. Otherwise, if $v_5 \in N$ but $v_4 \notin N$ add $\ell$-labeled edges $\{v_4,v_\ell\}$, and $\{v_i,v_n\}$ for $i \in N \setminus \{v_5\}$ to $G'$.

Having done this, we move on to consider the unlabeled edges in $G$.  Suppose $\{v_i,v_j\}$ is such an edge and $i<j$. If $i>5$, add the unlabeled edge $\{v_i,v_j\}$ to $G'$ as well.  If $i=4$, it is possible that $\{v_4,v_j\}$ already belongs to $G'$ as a $j$-labeled edge.  If this happens, then $\{v_4,v_j\}$ is an unlabeled edge in $G$ and $\{v_5,v_j\}$ is a $j$-labeled edge in $G$.  This means $\{v_5,v_j\} \notin G'$, so we can add the edge $\{v_5,v_j\}$ to $G'$. Finally, if $i=5$ and $\{v_4,v_j\}$ is not an edge in $G'$, add the unlabeled edge $\{v_4,v_j\}$ to $G'$. Otherwise, add the unlabeled edge $\{v_5,v_j\}$ to $G'$. This proves that $G'$ is also constructible. 

Next, by Lemma \ref{lemma:shifttriangles} we know $\#T(G) \leq \#T(G')$, and because $G'$ is constructible, Theorem \ref{thm:compressiontriangles} tells us $\#T(G') \leq \#T(\mathcal{C}(G'))$.  Finally, by Lemma \ref{compress-constructible}, there is a PS ear-decomposable simplicial complex $\Delta'$ whose underlying graph is $\mathcal{C}(G')$.  Use $\eta_A'$, $\eta_B'$, $\eta_E'$, and $\eta_F'$ to denote the number of ears of type A, B, E, and F respectively in the construction of $\Delta'$. It follows from the proof of Lemma \ref{compress-constructible} that the PS sphere used in the construction of $\Delta'$ is $\partial \sigma^3$ and that $\eta_A' = \eta_A+2$, $\eta_B' = \eta_B$,  $\eta_E' = \eta_E$, and $\eta_F' = 0$.  Finally, since $\#T(G) \leq \#T(\mathcal{C}(G'))$, there are at least $\eta_F$ missing triangles in $\Delta'$ that can be filled.  This proves the following theorem. 

\begin{theorem} \label{octahedron-to-tetrahedron}
Let $\Delta = \Sigma_0 \cup \Sigma_1 \cup \cdots \cup \Sigma_t$ be a $2$-dimensional PS ear-decomposable simplicial complex with $\Sigma_0 = \partial \sigma^1 *  \partial \sigma^1 *  \partial \sigma^1$ such that $\{v_1,v_4\} \notin \Delta$. 

There exists a PS ear-decomposable simplicial complex $\Delta' = \Sigma_0' \cup \Sigma_1' \cup \cdots \cup \Sigma_{t+2}'$ such that 
\begin{itemize}
\item $\Sigma_0' = \partial \sigma^3$,
\item $h(\Delta') = h(\Delta) + (0,0,0,2)$,
\item $\eta_A' = \eta_A+2$,
\item $\eta_B' = \eta_B$,
\item $\eta_E' = \eta_E$, and
\item $\eta_F' = \eta_F$.
\end{itemize}
\end{theorem}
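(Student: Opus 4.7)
The plan is to reduce to the tetrahedral base case by applying a shifting operation from Section~\ref{section:shifting-ops} to the underlying graph of $\Delta$. Since $\{v_1,v_4\}$ is absent from $\Delta$, the shifting operator $\mathcal{S}_{4,5}$ can be applied to the underlying graph $G$ of $\Delta$: this replaces $\{v_1,v_5\}$ by $\{v_1,v_4\}$ on the octahedral part, and more generally slides edges incident to $v_5$ over to $v_4$ whenever the target edge is not already present. The resulting graph $G'$ has the property that the induced subgraph on $\{v_1,v_2,v_3,v_4\}$ is $K_4$, making $G'$ a plausible candidate for a constructible graph whose initial step is $K_4$ on $\{v_1,\dots,v_4\}$, followed by two A-moves adding $v_6$ with labeled neighbors $\{v_1,v_3,v_4\}$ and $v_5$ with labeled neighbors $\{v_2,v_4,v_6\}$.

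The key technical step is to verify that $G'$ really is constructible. For each vertex $v_\ell$ with $\ell > 6$ introduced into $G$ by an A- or B-move with labeled neighborhood $N$, I decide which $\ell$-labeled edges to place in $G'$ by a simple rule: if $v_5\in N$ but $v_4\notin N$, swap $v_5$ for $v_4$ in $N$; otherwise keep $N$ unchanged. After all labeled edges are in place, the unlabeled edges are inserted by case analysis: an unlabeled edge $\{v_i,v_j\}$ of $G$ with $i<j$ is added as $\{v_i,v_j\}$ in $G'$ unless $i\in\{4,5\}$ and a conflict with an already-placed labeled edge forces it to be re-routed between $v_4$ and $v_5$. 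Careful tracking confirms this is always possible because $\mathcal{S}_{4,5}$ only moves an edge when its target slot is vacant.

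Once $G'$ is known to be constructible, the inequalities from Section~\ref{section:shifting-ops} chain together: Lemma~\ref{lemma:shifttriangles} gives $\#T(G)\leq\#T(G')$, and Theorem~\ref{thm:compressiontriangles} gives $\#T(G')\leq\#T(\mathcal{C}(G'))$. By Lemma~\ref{compress-constructible}, there is a $2$-dimensional PS ear-decomposable simplicial complex $\Delta'$ whose underlying graph is $\mathcal{C}(G')$ and whose base sphere is $\partial\sigma^3$. Inspecting the construction in Lemma~\ref{compress-constructible}, the ear counts come out to $\eta_A' = \eta_A + 2$, $\eta_B' = \eta_B$, $\eta_E' = \eta_E$, and (initially) $\eta_F' = 0$, where the extra two A-moves correspond to the insertions of $v_5$ and $v_6$. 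Since $\#T(G)\leq\#T(\mathcal{C}(G'))$ and the triangular faces already realized in $\Delta'$ number $3\eta_A' + 4\eta_B' + 2\eta_E' + 4$, at least $\eta_F$ missing triangles remain in $\Delta'$, and I can fill them one at a time by F-moves to achieve $\eta_F' = \eta_F$.

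The $h$-vector equation follows from a direct tally: replacing $h(\partial\sigma^1*\partial\sigma^1*\partial\sigma^1) = (1,3,3,1)$ by $h(\partial\sigma^3) = (1,1,1,1)$ costs $(0,2,2,0)$, while the two additional A-moves each contribute $(0,1,1,1)$, for a net change of $(0,0,0,2)$ as claimed. The main obstacle I expect is the bookkeeping needed to prove that $G'$ is constructible: the swap rule for labeled edges must simultaneously respect the order in which vertices are created, preserve the type (degree $3$ or $4$) of each newly added vertex, and avoid producing duplicate labeled edges incident to $v_4$.
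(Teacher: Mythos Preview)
Your proposal is correct and mirrors the paper's argument essentially step for step: apply $\mathcal{S}_{4,5}$, verify the shifted graph $G'$ is constructible with the $K_4$ base plus two A-moves for $v_6$ and $v_5$ using exactly your swap rule for labeled neighborhoods and re-routing rule for unlabeled edges, then chain Lemma~\ref{lemma:shifttriangles}, Theorem~\ref{thm:compressiontriangles}, and Lemma~\ref{compress-constructible} to produce $\Delta'$ and fill $\eta_F$ missing triangles. One small slip (which the paper shares): after the shift, $v_6$'s labeled neighbors should be $\{v_1,v_2,v_4\}$ and $v_5$'s should be $\{v_3,v_4,v_6\}$, not the sets you wrote, since $\{v_3,v_6\}$ and $\{v_2,v_5\}$ are non-edges of the octahedron and remain so under $\mathcal{S}_{4,5}$.
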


This brings us to the final subcase. 

\begin{theorem} \label{octahedron-remove-extra-monomials}
Let $\Delta = \Sigma_0 \cup \Sigma_1 \cup \cdots \cup \Sigma_t$ be a $2$-dimensional PS ear-decomposable simplicial complex with $\Sigma_0 = \partial \sigma^1 *  \partial \sigma^1 *  \partial \sigma^1$ such that $\{v_1,v_4\} \notin \Delta$. Assume further that $\eta_E > 0$ and $\eta_F > 0$. Then $h(\Delta)$ is a pure $\mathcal{O}$-sequence.
\end{theorem}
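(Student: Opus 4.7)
The plan is to combine Theorem \ref{octahedron-to-tetrahedron} with the tetrahedron case already established. Let $\Delta'$ be the PS ear-decomposable complex supplied by Theorem \ref{octahedron-to-tetrahedron}, so that $\Sigma_0' = \partial\sigma^3$, $h(\Delta') = h(\Delta) + (0,0,0,2)$, and the ear counts of $\Delta'$ agree with those of $\Delta$ except that $\eta_A' = \eta_A + 2$. Theorem \ref{thm:basespheretet}, via Algorithm \ref{tetrabasealg}, then produces a pure multicomplex $\mathcal{M}(\Delta')$ with $F(\mathcal{M}(\Delta')) = h(\Delta')$. It therefore suffices to exhibit two degree-3 monomials in $\mathcal{M}(\Delta')$ whose removal preserves purity, since the resulting multicomplex will have $F$-vector equal to $h(\Delta') - (0,0,0,2) = h(\Delta)$.

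A degree-3 monomial $\mu$ can be removed without destroying purity precisely when every degree-2 divisor of $\mu$ is also divisible by some other degree-3 monomial in $\mathcal{M}(\Delta')$. The strategy is to trace through Algorithm \ref{tetrabasealg} applied to $\Delta'$ and audit the coverage of each degree-2 monomial. The hypothesis $\eta_E' = \eta_E > 0$ ensures that Step 2 contributes at least one monomial of the form $x_i^2 x_j$, creating a second cover for $x_i^2$ beyond the one it receives in Step 0 or Step 1. The hypothesis $\eta_F' = \eta_F > 0$ ensures that Step 3 runs at least once; the revlex-smallest valid degree-3 monomial it selects (typically of the form $x_4 x_j^2$ or $x_i x_j^2$) creates a second cover for another degree-2 monomial. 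Together, these two redundancies produce two candidate degree-3 monomials---such as $x_4^3$ paired with an $x_p^3$ corresponding to one of the shifted-in A-vertices---whose removal is harmless.

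The main obstacle will be confirming that such a pair can always be chosen under the hypotheses, especially in boundary cases where $\eta_E = 1$ or $\eta_F = 1$ and the specific revlex order forces the Step 2 and Step 3 additions to produce overlapping rather than independent redundancies. In those situations, the proof will require a careful local analysis---possibly a small modification of the PS ear-decomposition of $\Delta'$ (reordering which E-move introduces which edge, or swapping which missing triangle is filled by an F-move) so that Algorithm \ref{tetrabasealg} outputs a multicomplex with two genuinely distinct redundant coverings. Once this is verified, deleting the chosen pair of degree-3 monomials yields a pure multicomplex with $F$-vector $h(\Delta)$, completing the proof.
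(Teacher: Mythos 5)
Your high-level strategy is exactly the paper's: apply Theorem \ref{octahedron-to-tetrahedron} to obtain $\Delta'$ with base sphere $\partial\sigma^3$ and $h(\Delta') = h(\Delta) + (0,0,0,2)$, run Algorithm \ref{tetrabasealg} to get the pure multicomplex $\mathcal{M}(\Delta')$, then delete two degree-3 monomials without breaking purity. But you explicitly defer the verification that such a pair exists (``the main obstacle will be confirming that such a pair can always be chosen... will require a careful local analysis''), and that verification is the entire content of the paper's proof. So as written there is a genuine gap, and one that your sketch does not reliably close.

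Concretely, the relevant case distinction is not, as you speculate, whether $\eta_E$ or $\eta_F$ is small, nor does the argument require reordering or modifying the PS ear-decomposition of $\Delta'$. Algorithm \ref{tetrabasealg} already orders vertices so that all type-B vertices precede type-A vertices, so the correct case split is according to the types of $v_5$ and $v_6$ in $\mathcal{C}(\Delta')$: both A, one B and one A, or both B. Because $\eta_A' \geq 2$, at least two vertices exist beyond the base tetrahedron, and the hypotheses $\eta_E > 0$ and $\eta_F > 0$ guarantee that Step 2 contributes the revlex-smallest missing degree-2 monomial and Step 3 contributes the revlex-smallest available degree-3 monomial. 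In each of the three cases this pins down a specific small portion of $\mathcal{M}(\Delta')$, from which one can read off a removable pair: $\{x_4^3, x_5^3\}$ when $v_5, v_6$ are both type A; $\{x_4^3, x_4^2x_5\}$ when $v_5$ is type B and $v_6$ is type A; and $\{x_4^3, x_4x_5^2\}$ when both are type B. Your candidate pair (``$x_4^3$ paired with an $x_p^3$ corresponding to one of the shifted-in A-vertices'') is correct only in the first case — when a type-B vertex exists, $x_p^3$ for a type-A vertex $v_p$ is a facet whose removal would orphan $x_p^2$, since in Algorithm \ref{tetrabasealg} type-A vertices are not in the support of any E- or F-step monomials of the form $x_i^2 x_j$ with $i < p$. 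You would need to carry out the case analysis to arrive at the correct pair in each regime.
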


\begin{proof}
Let $\Delta'$ be the complex whose existence is guaranteed by Theorem \ref{octahedron-to-tetrahedron}, and let $\mathcal{C}(\Delta')$ and $\mathcal{M} = \mathcal{M}(\Delta')$ be the PS ear-decomposable simplicial complex and corresponding pure multicomplex output by Algorithm \ref{tetrabasealg}. It is important to recall that the vertex labels in $\mathcal{C}(\Delta')$ may be permuted from their initial labels in $\Delta$ and $\Delta'$.  Our goal is to show that there are two degree-$3$ monomials in $\mathcal{M}$ that can be removed without destroying the purity of the multicomplex.  We examine three cases. 

\textit{Case 1:}  Vertices $v_5$ and $v_6$ have type A in $\mathcal{C}(\Delta')$. 

In Step 1 of Algorithm \ref{tetrabasealg}, the monomials $\{x_5,x_5^2,x_5^3\}$ and $\{x_6,x_6^2,x_6^3\}$ are added to the initial set of monomials $\{1,x_4,x_4^2,x_4^3\}$ in $\mathcal{M}$.  Because $\eta_E > 0$, and $x_4x_5$ is the revlex smallest monomial of degree-2 that does not belong to $\mathcal{M}$ upon the completion of Step 1, the monomials $\{x_4x_5,x_4^2x_5\}$ will be added to $\mathcal{M}$ in Step 2 of Algorithm \ref{tetrabasealg}. Finally, because $\eta_F>0$, the monomial $x_4x_5^2$ will be added in Step 3 as it is the revlex smallest monomial of degree $3$ that does not belong to $\mathcal{M}$ upon the completion of Step 2.  This means that the following monomials will belong to $\mathcal{M}$ upon the completion of Algorithm \ref{tetrabasealg}

\begin{center}
\begin{tabular}{ccccc}
$x_4^3$ & $x_5^3$  & $x_6^3$  & $x_4^2x_5$  & $x_4x_5^2$ \\
$x_4^2$ & $x_5^2$  & $x_6^2$  & $x_4x_5$  & \\
$x_4$ & $x_5$  & $x_6$  &  & \\
$1$ &  &  &  & \\
\end{tabular}
\end{center}

Now we can observe that $\mathcal{M} \setminus \{x_4^3,x_5^3\}$ is a pure multicomplex whose $F$-vector is $h(\Delta)$.

\textit{Case 2:} Vertex $v_5$ has type B and  vertex $v_6$ has type A in $\mathcal{C}(\Delta')$. 

In Step 1 of Algorithm \ref{tetrabasealg}, the monomials $\{x_5,x_4x_5x_5^2,x_4x_5^2\}$ and $\{x_6,x_6^2,x_6^3\}$ are added to the initial set of monomials $\{1,x_4,x_4^2,x_4^3\}$ in $\mathcal{M}$. Because $\eta_E > 0$, the monomials $\{x_4x_6,x_4^2x_6\}$ will be added to $\mathcal{M}$ in Step 2 of Algorithm \ref{tetrabasealg}. Finally, because $\eta_F>0$, the monomial $x_4^2x_5$ will be added in Step 3.  This means that the following monomials will belong to $\mathcal{M}$ upon the completion of Algorithm \ref{tetrabasealg}

\begin{center}
\begin{tabular}{cccccc}
$x_4^3$ & \multicolumn{2}{c}{$x_4x_5^2$}  & $x_6^3$  & $x_4^2x_6$  & $x_4^2x_5$ \\
$x_4^2$ & $x_4x_5$ & $x_5^2$  & $x_6^2$  & $x_4x_6$  & \\
$x_4$ & \multicolumn{2}{c}{$x_5$} & $x_6$  &  & \\
$1$ &  &  &  & \\
\end{tabular}
\end{center}

Now we can observe that $\mathcal{M} \setminus \{x_4^3,x_4^2x_5\}$ is a pure multicomplex whose $F$-vector is $h(\Delta)$.

\textit{Case 3:}  Vertices $v_5$ and $v_6$ have type B in $\mathcal{C}(\Delta')$. 

In Step 1 of Algorithm \ref{tetrabasealg}, the monomials $\{x_5,x_4x_5x_5^2,x_4x_5^2\}$ and $\{x_6,x_4x_6x_6^2,x_4x_6^2\}$ are added to the initial set of monomials $\{1,x_4,x_4^2,x_4^3\}$ in $\mathcal{M}$.  Because $\eta_E > 0$, the monomials $\{x_5x_6,x_5^2x_6\}$ will be added to $\mathcal{M}$ in Step 2 of Algorithm \ref{tetrabasealg}. Finally, because $\eta_F>0$, the monomial $x_4^2x_5$ will be added in Step 3.  This means that the following monomials will belong to $\mathcal{M}$ upon the completion of Algorithm \ref{tetrabasealg}

\begin{center}
\begin{tabular}{ccccccc}
$x_4^3$ & \multicolumn{2}{c}{$x_4x_5^2$}   &  \multicolumn{2}{c}{$x_4x_6^2$}   & $x_5^2x_6$  & $x_4^2x_5$ \\
$x_4^2$  & $x_4x_5$ & $x_5^2$  & $x_4x_6$ & $x_6^2$  & $x_5x_6$ &   \\
$x_4$ & \multicolumn{2}{c}{$x_5$}  & \multicolumn{2}{c}{$x_6$}  &  & \\
$1$ &  &  &  & & & \\
\end{tabular}
\end{center}

Now we can observe that $\mathcal{M} \setminus \{x_4^3,x_4x_5^2\}$ is a pure multicomplex whose $F$-vector is $h(\Delta)$.
\end{proof}

Together, 
 Theorem \ref{thm:octahedron-F=0},  Theorem \ref{thm:octahedron-E=0}, Proposition \ref{prop:base-oct-withv1v4}, and Theorems \ref{octahedron-to-tetrahedron} and  \ref{octahedron-remove-extra-monomials} exhaust all possibilities when $\Sigma_0 = \partial \sigma^1 *\partial \sigma^1 *\partial \sigma^1.$  This completes the proof of the main theorem. 
 
\begin{theorem} \label{thm:basesphereoctahedron}
Let $\Delta = \Sigma_0 \cup \Sigma_1 \cup \cdots \cup \Sigma_t$ be a $2$-dimensional PS ear-decomposable simplicial complex with $\Sigma_0 = \partial \sigma^1 * \partial \sigma^1 * \partial\sigma^1$.  Then $h(\Delta)$ is a pure $\mathcal{O}$-sequence.
\end{theorem}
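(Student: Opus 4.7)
The plan is to prove Theorem \ref{thm:basesphereoctahedron} by a case analysis on the parameters $\eta_E$ and $\eta_F$, further refined by whether the ``diagonal'' edge $\{v_1,v_4\}$ of the octahedron belongs to $\Delta$. Since all of the technical work has already been done in the preceding lemmas, propositions, and theorems of Section \ref{section:main}, the role of this proof is simply to verify that these cases exhaust every PS ear-decomposable $\Delta$ with $\Sigma_0 = \partial\sigma^1 * \partial\sigma^1 * \partial\sigma^1$ and to invoke the appropriate result in each case.

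First, I would handle the two ``extremal'' cases in which the parameters simplify matters: if $\eta_F = 0$, apply Theorem \ref{thm:octahedron-F=0} directly to conclude that $h(\Delta)$ is a pure $\mathcal{O}$-sequence; if instead $\eta_E = 0$, apply Theorem \ref{thm:octahedron-E=0}. These two cases together dispose of every $\Delta$ in which at least one of $\eta_E$, $\eta_F$ vanishes. It therefore remains to treat the case $\eta_E > 0$ and $\eta_F > 0$.

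For this remaining case, I would split once more, according to whether $\{v_1,v_4\} \in \Delta$. If $\{v_1,v_4\} \in \Delta$, Proposition \ref{prop:base-oct-withv1v4} produces an alternative PS ear-decomposition of (a complex with the same $h$-vector as) $\Delta$ whose initial sphere $\Sigma_0'$ is a bipyramid $\partial\sigma^1 * \partial\sigma^2$, and so the purity conclusion follows from Theorem \ref{thm:basespherebipyramid}. If instead $\{v_1,v_4\} \notin \Delta$, I would apply Theorem \ref{octahedron-to-tetrahedron} to pass to a PS ear-decomposable complex $\Delta'$ with $\Sigma_0' = \partial\sigma^3$ and $h(\Delta') = h(\Delta) + (0,0,0,2)$, and then apply Theorem \ref{octahedron-remove-extra-monomials} to see that two degree-$3$ monomials can be stripped from the pure multicomplex produced for $\Delta'$ by Algorithm \ref{tetrabasealg} while maintaining purity, giving a pure multicomplex whose $F$-vector is exactly $h(\Delta)$.

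The only subtle part is making sure the case split is genuinely exhaustive: every $\Delta$ falls into exactly one of ``$\eta_F = 0$'', ``$\eta_E = 0$ and $\eta_F > 0$'' (covered by Theorem \ref{thm:octahedron-E=0}, which allows $\eta_F > 0$), or ``$\eta_E > 0$ and $\eta_F > 0$'', and the latter further partitions cleanly according to whether $\{v_1,v_4\}\in\Delta$. Since the main technical obstacles (the extremal bounds on $\#T(G)$, the construction and termination of the three compression algorithms, the shifting reduction, and the removal of two extra degree-$3$ monomials) have already been handled in the cited results, no additional argument is needed beyond pointing at each case.
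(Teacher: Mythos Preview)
Your proposal is correct and matches the paper's approach exactly: the paper's proof of Theorem \ref{thm:basesphereoctahedron} is precisely the observation that Theorems \ref{thm:octahedron-F=0} and \ref{thm:octahedron-E=0}, Proposition \ref{prop:base-oct-withv1v4}, and Theorems \ref{octahedron-to-tetrahedron} and \ref{octahedron-remove-extra-monomials} together exhaust all cases. Your case split and the results you invoke in each case are identical to the paper's.
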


\bibliography{biblio}
\bibliographystyle{plain}

\end{document}